\newtheorem{theo}{Theorem}[section]
\newtheorem{prop}[theo]{Proposition}
\newtheorem{lem}[theo]{Lemma}
\newtheorem*{claim}{Claim}
\newtheorem{conj}[theo]{Conjecture}
\newtheorem{cor}[theo]{Corollary}
\theoremstyle{definition}
\newtheorem{defi}[theo]{Definition}
\newtheorem{exmp}[theo]{Example}
\newtheorem{rem}[theo]{Remark}
\newtheorem{Step}{Step}
\newcommand{\spec}{\mathop{\mathrm{Spec}}\nolimits}
\newcommand{\id}{\mathrm{id}}
\newcommand{\Fpt}{\mathrm{Fpt}}
\newcommand{\lct}{\mathop{\mathrm{lct}}}
\newcommand{\ch}{\mathrm{char}}
\newcommand{\diag}{\mathrm{diag}}
\newcommand{\wt}{\mathrm{wt}}
\newcommand{\sing}{\mathop{\mathrm{Sing}}\nolimits}
\newcommand{\supp}{\mathop{\mathrm{Supp}}\nolimits}
\newcommand{\divis}{\mathop{\mathrm{div}}\nolimits}
\begin{document}
 \title[Chow stability]{Extensions of two Chow stability criteria to positive characteristics}
 \author{Shinnosuke OKAWA}
 \address{Graduate School of Mathematical Sciences, 
the University of Tokyo, 3-8-1 Komaba, Meguro-Ku, Tokyo 153-8914, Japan.}
\email{okawa@ms.u-tokyo.ac.jp}
\date{\today}
\subjclass[2000]{Primary 14L24; Secondary 13A50, 14B05}
\keywords{Chow stability, log canonical threshold}

 \maketitle
 

\begin{abstract}
We extend two results on Chow (semi-)stability to positive
characteristics. One is on the stability of non-singular projective
hypersurfaces of degree at least three, and the other is the criterion
by Y. Lee in terms of log canonical thresholds.
Some properties of log-canonicity in positive characteristics are discussed
with a couple of examples, in connection with the proof of the latter one.
It is also proven in Appendix that the sum of Chow (semi-)stable cycles are again
Chow (semi-)stable.
\end{abstract}


\section{Introduction}
We work over an algebraically closed field $k$ of an arbitrary characteristic.

Let $X\subset\mathbb{P}^{n}_{k}$ be an effective cycle of dimension $r$ and degree $d$
in a projective space of dimension $n$.
Analysis of the Chow (semi-)stability of X is one of the basic problems in Geometric Invariant
Theory (GIT). Contrary to the asymptotic Chow (semi-)stability, the precise classification of
Chow (semi-)stable cycles is quite a subtle problem, and is known
only for few cases, even for projective hypersurfaces. To name a few,
J. Shah studied the case of plane sextics (\cite{shah}) and recently
R. Laza did the case of cubic fourfolds (\cite{rl}), both in relation with
period maps.

On the other hand, there are two sufficient conditions for Chow (semi-)stability
in terms of the singularity of $X$ or that of Chow divisor $Z(X)\subset\mathbb{G}
=\mathrm{Grass}_{k}(n-r,n+1)$,
which deal with general situations. Both have been proven in characteristic zero,
and the purpose of this paper is to extend them to arbitrary characteristics. Namely
we prove the following two theorems:
\begin{theo}[$=$ Theorem \ref{hp}]\label{hp in Intro}
If $d\ge 3$, any non-singular projective hypersurface of degree $d$ is 
Chow stable.
\end{theo}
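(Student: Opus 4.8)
The plan is to use the Hilbert–Mumford numerical criterion for Chow stability. By this criterion, $X$ is Chow stable if and only if for every nontrivial one-parameter subgroup (1-PS) $\lambda : \mathbb{G}_m \to SL(n+1,k)$, the Mumford weight $\mu(X, \lambda)$ is strictly negative. After diagonalizing, a 1-PS is given by integer weights $w_0 \geq w_1 \geq \cdots \geq w_n$ with $\sum w_i = 0$, acting on coordinates by $t \cdot x_i = t^{w_i} x_i$. The Chow weight of a degree-$d$ hypersurface $X = \{F = 0\}$ decomposes into a combinatorial piece coming from the action on the Chow form, and the goal is to bound it in terms of the weighted order of vanishing of $F$. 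I want to show that nonsingularity forces this weight to be strictly negative for every choice of nontrivial weights.

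The key technical step is to **express the Chow weight of $X$ via a log canonical threshold / integration-type estimate**, so that the input ``$X$ is nonsingular'' can be fed in cleanly. Concretely, I would introduce the normalized weight vector and relate $\mu(X,\lambda)$ to a function of the weighted multiplicity of $F$ along the flag determined by $\lambda$. The crucial inequality to establish is that if $F$ defines a nonsingular hypersurface then, for the destabilizing weight, the weighted order of vanishing of $F$ is small enough that the resulting Chow weight stays negative. Here the hypothesis $d \geq 3$ enters: for a nonsingular hypersurface the partial derivatives $\partial F/\partial x_i$ have no common zero, so the weighted multiplicity of $F$ at any point is controlled, and one gets an estimate of the shape $\mu(X,\lambda) < 0$ precisely when $d \geq 3$ (degree two being the boundary case where quadrics are only semistable/strictly semistable under this analysis). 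The characteristic-free content is that the nonvanishing of the partial derivatives, hence the bound on weighted multiplicity, is purely algebraic and does not use any characteristic-zero input such as resolution of singularities or the Bertini-type arguments that the original proof likely invoked.

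The **main obstacle** I anticipate is the step that in characteristic zero is usually handled by an analytic or log-canonical-threshold computation: one must reprove the weighted-multiplicity bound without appealing to transcendental methods or to results valid only when $\ch k = 0$. In positive characteristic the Euler relation $\sum_i x_i \, \partial F/\partial x_i = d \cdot F$ can degenerate when $p \mid d$, so the usual passage from ``nonsingular'' to ``the $\partial F/\partial x_i$ cut out the empty scheme'' must be argued directly from the Jacobian criterion rather than through the Euler identity. I would therefore isolate a purely combinatorial lemma: for every nontrivial weight vector $w$, the minimal weighted degree monomial appearing in the expansion of the relevant resultant/Chow form, subject to $F$ being nonsingular, satisfies a strict inequality. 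Once that lemma is in place, summing over the flag and normalizing gives $\mu(X,\lambda) < 0$ for all nontrivial $\lambda$, and Chow stability follows from the numerical criterion; I expect the bulk of the real work to lie in making this combinatorial/multiplicity estimate uniform in the characteristic.
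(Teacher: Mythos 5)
Your proposal has a genuine gap, and it sits exactly at the point you identify as the key input. You assert that for a nonsingular hypersurface the partial derivatives $\partial F/\partial X_i$ have no common zero, and you plan to feed this into a weighted-multiplicity estimate via the Jacobian criterion. But this assertion is false precisely in the case the theorem needs to cover, namely $p \mid d$. Nonsingularity of $V(F)$ is equivalent to the emptiness of $V\bigl(F, \partial F/\partial X_0, \ldots, \partial F/\partial X_n\bigr)$; only when $p \nmid d$ does Euler's relation $dF = \sum_i X_i\,\partial F/\partial X_i$ allow one to drop $F$ from the list and conclude that the partials alone have empty common zero locus. When $p \mid d$ and $d \ge 3$, the paper shows (Proposition \ref{non-emptiness}) that $V\bigl(\partial F/\partial X_0, \ldots, \partial F/\partial X_n\bigr)$ is nonempty for \emph{every} degree-$d$ hypersurface, nonsingular or not: there are always points off the hypersurface where all partials vanish, and the Jacobian criterion says nothing about those. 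Beyond this, the ``purely combinatorial lemma'' bounding the minimal weighted degree of the Chow form is exactly the hard content of Mumford's characteristic-zero argument; your sketch defers it to future work rather than proving it, and without the partial-derivative input it is not clear what would replace it.

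The paper takes a different route that sidesteps the issue entirely: it lifts the equation $F$ to the ring of Witt vectors $W(k)$, choosing a lift $F_W$ with the same set of monomials, i.e.\ $\mathcal{I}(F)=\mathcal{I}(F_W)$, shows that $V(F_W)$ is integral and hence flat over $\spec W$, deduces that the geometric generic fiber is a nonsingular hypersurface over a field of characteristic zero, applies the known characteristic-zero stability result there, and transfers the conclusion back because the Hilbert--Mumford numerical function depends only on which monomials occur (Remark \ref{key remark} and Theorem \ref{red.technique}). If you want a direct characteristic-$p$ argument along your lines, you would need a substitute for the discriminant/partial-derivative input that survives $p \mid d$, which is precisely the obstruction the lifting technique is designed to avoid.
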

\begin{theo}[$=$ Theorem \ref{yl}]\label{yl in Intro}
Let $X$ be an effective cycle of dimension $r$ and degree $d$ in 
$\mathbb{P}^n_{k}$. Let $(\mathbb{G},Z(X))$ be the log pair defined by the
Chow divisor $Z(X)$ of $X$. If $\lct(\mathbb{G},Z(X))>\frac{n+1}{d}$
(resp. $\ge\frac{n+1}{d}$), then $X$ is Chow stable (resp. Chow semi-stable).
\end{theo}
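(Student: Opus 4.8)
The plan is to derive the criterion from the Hilbert--Mumford numerical criterion, which holds over any algebraically closed field, so that Chow stability (resp. semi-stability) of $X$ is equivalent to the strict positivity (resp. non-negativity) of the Chow--Mumford weight $\mu(\hat X,\lambda)$ for every nontrivial one-parameter subgroup $\lambda\colon\mathbb{G}_m\to\mathrm{SL}(n+1,k)$. Fixing coordinates so that $\lambda(t)=\diag(t^{w_0},\dots,t^{w_n})$ with $\sum_i w_i=0$, the first step is to recall Mumford's intersection-theoretic formula for the Chow weight; this part is combinatorial and carries over to positive characteristic with only bookkeeping changes. The second, conceptual, step is to transport this weight to the Grassmannian side and express it through the Chow divisor $Z(X)\in\lvert\mathcal{O}_{\mathbb{G}}(d)\rvert$.

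The geometric input I would emphasize is that $\mathbb{G}=\mathrm{Grass}_k(n-r,n+1)$ is Fano of index $n+1$, that is $-K_{\mathbb{G}}\cong\mathcal{O}_{\mathbb{G}}(n+1)$ in the Pl\"ucker polarization. Hence $K_{\mathbb{G}}+\tfrac{n+1}{d}Z(X)\sim_{\mathbb{Q}}0$, and the hypothesis $\lct(\mathbb{G},Z(X))>\tfrac{n+1}{d}$ (resp. $\ge$) says exactly that the log pair $(\mathbb{G},\tfrac{n+1}{d}Z(X))$ is klt (resp. log canonical). To a $\lambda$ with distinct weights I attach its attracting fixed point $P_\lambda\in\mathbb{G}$ (a coordinate $(n-r)$-plane); in the natural affine chart the induced $\mathbb{G}_m$-action is diagonal with positive weights, and I let $v_\lambda=\mathrm{ord}_E$ be the associated monomial, i.e. weighted-blow-up, valuation, $E$ the exceptional divisor over the smooth point $P_\lambda$. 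Its log discrepancy $A_{\mathbb{G}}(v_\lambda)$ is the sum of the chart weights and $v_\lambda(Z(X))$ is the weighted order of a local equation of $Z(X)$ at $P_\lambda$; crucially, a weighted blow-up of a smooth point is a toric modification, so both quantities are computable explicitly and no resolution of singularities is required. The general $\lambda$ is handled by the same formalism together with the standard reduction within the numerical criterion.

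The key identity to establish is that, after this translation,
\[
\mu(\hat X,\lambda)=c_\lambda\Bigl(A_{\mathbb{G}}(v_\lambda)-\tfrac{n+1}{d}\,v_\lambda(Z(X))\Bigr),\qquad c_\lambda>0,
\]
where the factor $n+1$ records $-K_{\mathbb{G}}=\mathcal{O}_{\mathbb{G}}(n+1)$ and the $d$ records $Z(X)\in\lvert\mathcal{O}_{\mathbb{G}}(d)\rvert$. Granting it, the argument closes at once: since $\lct$ is an infimum over divisorial valuations, one has $A_{\mathbb{G}}(v_\lambda)/v_\lambda(Z(X))\ge\lct(\mathbb{G},Z(X))$, so the hypothesis forces $A_{\mathbb{G}}(v_\lambda)-\tfrac{n+1}{d}v_\lambda(Z(X))>0$ (resp. $\ge0$), hence $\mu(\hat X,\lambda)>0$ (resp. $\ge0$) for every $\lambda$. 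Equivalently, I would argue by contraposition: a destabilizing $\lambda$ would produce a single divisor $E$ over $\mathbb{G}$ with $A_{\mathbb{G}}(v_\lambda)\le\tfrac{n+1}{d}v_\lambda(Z(X))$, contradicting the klt (resp. lc) hypothesis.

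I expect the main obstacle to be characteristic-theoretic rather than combinatorial: namely, making the log canonical threshold and the one inequality $\lct\le A_{\mathbb{G}}(v)/v(Z(X))$ meaningful and correct in characteristic $p$, where resolution of singularities is unavailable in general. The mitigating feature is that only the explicit toric valuations $v_\lambda$ ever enter, so their log discrepancies and the weighted orders of $Z(X)$ can be computed directly on weighted blow-ups of the smooth Grassmannian; the delicate points are to fix a definition of $\lct(\mathbb{G},Z(X))$ that behaves well under these modifications and to verify that the single easy inequality we need holds for it. This is precisely where the paper's discussion of log-canonicity in positive characteristic, and its cautionary examples, must be brought to bear. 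By contrast, the reduction to the numerical criterion and Mumford's weight formula are characteristic-independent and should cost only routine verification.
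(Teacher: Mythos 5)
Your proposal follows the same overall architecture as the paper's proof: reduce to the Hilbert--Mumford criterion, identify the Chow weight of a diagonal $1$-PS with a weighted multiplicity of the local Chow form at the attracting fixed point of the Pl\"ucker chart $D_+(\Delta_{[n-r-1]})\simeq\mathbb{A}^{(n-r)(r+1)}$, and bound that ratio by $1/\lct$. Your ``key identity'' is exactly Lemma \ref{nc by yl} (with $c_\lambda=d/(n+1)$ and the opposite sign convention for $\mu$), and the inequality $A_{\mathbb{G}}(v_\lambda)/v_\lambda(Z(X))\ge\lct(\mathbb{G},Z(X))$ is exactly Proposition \ref{kl}. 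Where you genuinely diverge is in how that last inequality is established. The paper follows Koll\'ar's cyclic-covering argument---pull back by $x_I\mapsto x_I^{w(x_I)}$ and use the ordinary blow-up upstairs---and in characteristic $p$ this is what forces the entire separability discussion: the weak ramification formula, Proposition \ref{pull-back} for finite separable covers, and the density/continuity perturbation of Step \ref{general case} to dodge the inseparable covers arising when $p\mid w(x_I)$. You instead realize $v_\lambda$ directly as $\mathrm{ord}_E$ for the exceptional divisor of a weighted blow-up, a toric modification of the smooth chart. That route works and is arguably cleaner: with the paper's definition of log canonicity (discrepancy $\ge-1$ against \emph{every} divisor over $\mathbb{G}$), the bound $\lct\le A(E)/\mathrm{ord}_E(Z(X))$ is immediate once one knows $a(E;\mathbb{G},0)=\sum_I w(x_I)-1$ and $\mathrm{ord}_E(f)=w(f)$, and both are combinatorial toric computations whose only input is a monomial coordinate change with Jacobian a unit times a monomial, hence valid in any characteristic. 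So the ``delicate point'' you flag at the end largely dissolves on your route; it is the covering-trick route that genuinely needs the wild-ramification analysis. The two items you leave unverified---the combinatorial identity of Lemma \ref{nc by yl} (including the case of non-distinct weights, where some $w(x_I)=0$ and $E$ lies over a coordinate subvariety rather than a point) and the toric discrepancy computation---are true and routine, so I see no gap, only deferred verification.
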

In the statement of Theorem \ref{yl in Intro}, $\lct(\mathbb{G},Z(X))$ stands for
the log canonical threshold of $(\mathbb{G},Z(X))$, which measures how good
the singularity of $Z(X)$ is (see \S\ref{Notions of singularities} for
detail).

Characteristic zero case of Theorem \ref{hp in Intro} is due to Mumford (\cite[Chapter 4 \S 2]{git}),
and that of Theorem \ref{yl in Intro} is due to Y. Lee (\cite{yl}).

The original proof of Theorem \ref{hp in Intro} works only when the characteristic of the base field
does not divide $d$ (see \S \ref{Chow stability of non-singular hypersurfaces}). To prove the general case, we depend on the
corresponding result in characteristic zero.

We sketch the proof of Theorem \ref{hp in Intro} in positive characteristics.
First we take a suitable lift of the equation of given hypersurface over the ring of
Witt vectors. This defines a family of projective hypersurfaces over the ring.
We are assuming that the closed fiber is non-singular, hence the geometric generic fiber
is again non-singular. Since we know that Theorem \ref{hp in Intro} holds in characteristic
zero, we obtain
some inequalities for the Hilbert-Mumford numerical functions of the lift. By the
choice of the lift, those numerical functions coincide with those of the original hypersurface.
Thus we obtain the inequalities for the numerical functions of the original one, concluding
the proof.

The point is that the singularity of the hypersurface over the generic point is better than that
of the special fiber, so that we can use the corresponding stability criterion
in characteristic zero.
This method seems to be applicable to other stability problems
(see the remark at the beginning of \S
\ref{Y. Lee's criterion in characteristic $p$}).

In \S\ref{The defining equation} it will also be shown that the complement of the locus of non-singular hypersurfaces
is an irreducible divisor, even when $p$ divides $d$. In general some multiple of the defining equation of
this divisor lifts to the usual discriminant in characteristic zero. 

Theorem \ref{yl in Intro} will be proven along the same line as the proof given in \cite{yl}, but
we must modify several points. This is due to the fact that some properties of log canonicity
which hold in characteristic zero fail in positive characteristics, because of
the existence of wild ramifications and inseparable morphisms.

We can prove that the property of log canonicity which we need still holds for
finite separable morphisms. It turns out that this is enough for our purpose,
for we can use a perturbation technique so that we need not to deal with
the inseparable morphisms (see \S \ref{Y. Lee's criterion in characteristic $p$}).

In \S \ref{Y. Lee's criterion in characteristic $p$}
we also discuss some other properties of log canonicity, with a couple of (counter-)examples.

In Appendix \ref{sum} we prove the following
\begin{prop}[$=$ Proposition \ref{stability of sum}]\label{stability of sum in Intro}
Let $Y,Z$ be Chow semi-stable cycles of the same dimension in a projective
space $\mathbb{P}^{n}_{k}$. Then $Y+Z$ is again Chow semi-stable.
Furthermore if $Y$ is Chow stable, so is $Y+Z$.
\end{prop}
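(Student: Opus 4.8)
The plan is to verify the Hilbert--Mumford numerical criterion directly for the Chow point of $Y+Z$, exploiting the multiplicativity of Chow forms. Recall that Chow (semi-)stability of an $r$-cycle $X$ of degree $d$ in $\mathbb{P}^n_k$ means GIT (semi-)stability of its Chow point --- the class of the Chow form $R_X$ defining the Chow divisor $Z(X)$ --- under the natural $\mathrm{SL}_{n+1}$-action on the ambient projectivized space of forms, $R_X$ being multihomogeneous of multidegree $(d,\dots,d)$. By Mumford's numerical criterion, which is valid over an algebraically closed field of arbitrary characteristic, $X$ is Chow semi-stable (resp. stable) if and only if $\mu(X,\lambda)\ge 0$ (resp. $>0$) for every nontrivial one-parameter subgroup $\lambda$ of $\mathrm{SL}_{n+1}$; in the standard normalization $\mu(X,\lambda)=-\min\{w : R_X^{(w)}\neq 0\}$, where $R_X=\sum_w R_X^{(w)}$ is the decomposition into $\lambda$-weight spaces.

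First I would record the key algebraic input: for cycles of the same dimension the Chow form is multiplicative, i.e. $R_{Y+Z}=R_Y\cdot R_Z$ up to a nonzero scalar. This is built into the construction of the Chow form (the Cayley--Chow form of an irreducible variety, extended to cycles by taking products weighted by multiplicities), and the identity holds over any field. It is exactly here that the hypothesis that $Y$ and $Z$ have the same dimension $r$ enters: only then do $R_Y$ and $R_Z$ live in the same groups of variables, so that their product is again a Chow form, namely that of $Y+Z$, of degree $\deg Y+\deg Z$.

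Next I would deduce that the weight is additive, $\mu(Y+Z,\lambda)=\mu(Y,\lambda)+\mu(Z,\lambda)$ for every $\lambda$. Fixing $\lambda$ and setting $m_Y=\min\{w:R_Y^{(w)}\neq 0\}$ and $m_Z=\min\{w:R_Z^{(w)}\neq 0\}$, the lowest $\lambda$-weight component of $R_Y R_Z$ equals $R_Y^{(m_Y)}R_Z^{(m_Z)}$: any pair of homogeneous components whose weights sum to $m_Y+m_Z$ must have indices $\ge m_Y$ and $\ge m_Z$ respectively, forcing equality. Since the ring of forms is an integral domain this product is nonzero, so no cancellation occurs and $\min\{w:R_{Y+Z}^{(w)}\neq 0\}=m_Y+m_Z$, whence the asserted additivity.

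Finally I would conclude. If $Y$ and $Z$ are both semi-stable then $\mu(Y,\lambda),\mu(Z,\lambda)\ge 0$ for all $\lambda$, hence $\mu(Y+Z,\lambda)\ge 0$ and $Y+Z$ is semi-stable; if moreover $Y$ is stable then $\mu(Y,\lambda)>0$ and $\mu(Z,\lambda)\ge 0$ give $\mu(Y+Z,\lambda)>0$, so $Y+Z$ is stable. The argument is formal once multiplicativity is in hand, so the only point that genuinely needs securing --- the main obstacle, such as it is --- is the identity $R_{Y+Z}=R_Y R_Z$ together with the non-vanishing of the lowest-weight term; in positive characteristic one should additionally confirm that no normalization hidden in the definition of the Chow point or of $\mu$ introduces characteristic-dependent factors that might break the additivity.
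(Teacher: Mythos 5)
Your proposal is correct and is essentially the paper's own argument: both rest on the multiplicativity of Chow forms, the one-parameter-subgroup reduction, and the integral-domain property of the coordinate ring $\mathcal{B}$ to rule out cancellation in the lowest-weight term. The paper phrases this via limits $\lim_{t\to 0}\lambda(t)\cdot F$ and explicitly defers the rigorous weight bookkeeping as routine; your additivity computation $\mu(Y+Z,\lambda)=\mu(Y,\lambda)+\mu(Z,\lambda)$ is exactly that omitted detail made precise.
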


This proposition may be well-known to experts, but the author could not find it in the literature. 
The proof is a simple application of the fact that the stability can be checked
1-PS wise, which is essentially the same as the numerical criterion.
But the conclusion itself seems to be rather
surprising: if we have two Chow stable cycles, the sum of them is always Chow stable no matter
how badly they touch.

Proposition \ref{stability of sum in Intro} will be used to give a family of stable projective hypersurfaces
whose stability can not be detected by Theorem \ref{yl in Intro} (see Example \ref{counter example}).


\subsection*{Acknowledgement}
The author would like to express his sincere gratitude
to his supervisor Professor Yujiro Kawamata for his
valuable advice, many suggestions to improve this paper and warm encouragement.
He is indebted to Prof. Yongnam Lee for kindly answering the author's question and Prof. Shunsuke
Takagi for informing the author of the paper \cite{mus} in relation with Conjecture \ref{lsc}.
He would also like to thank the referee for useful comments and corrections for the draft of this paper.
Thanks are also due to Prof. Kei-ichi Watanabe for his useful comments and
interest in my work, and to Dr. Shohei Ma for his nice comments. 

This paper is an extended version of the author's master thesis submitted to the University of Tokyo
in 2010.


\section{Preliminary}

\subsection{Notations from scheme theory}
We need some notations from \cite{ha}.

Let $R$ be an $\mathbb{N}$-graded ring. For a homogeneous ideal $I$ of $R$ we denote
by $V(I)$ the corresponding closed subscheme $\mathrm{Proj}(R/I)$ of $\mathrm{Proj}R$.

For a homogeneous element $f\in R$,
we denote $\mathrm{Proj}R\setminus V(f)$ by $D_+(f)$.
This open subscheme is known to be affine, with coordinate ring
\begin{equation*}
R_{(f)}=\left\{\frac{r}{f^n} | r\in R , \ \ \deg (r)=n\cdot\deg (f)\right\}.
\end{equation*}


\subsection{Notions of singularities}\label{Notions of singularities}

In this subsection, we summarize the notions of singularities of pairs which we need later.

\begin{defi}[discrepancy, log canonical]

Let $X$ be a normal variety over $k$ and $\Delta$ be an effective
$\mathbb{R}$-Weil divisor on $X$ such that $K_X+\Delta$ is $\mathbb{R}$-Cartier.

Let $\pi:Y\to X$ be a birational morphism from another normal variety $Y$
over $k$ and
$E\subset Y$ be a prime divisor. Then in a neighborhood of the generic point
of $E$, the following canonical bundle formula holds:
\begin{equation*}
K_Y=\pi^{*}(K_X+\Delta)+aE.
\end{equation*}
The real number $a$ in the above equation is called the \textit{discrepancy}
of $E$ with respect to $(X,\Delta)$, and
denoted by $a(E;X,\Delta)$. It is independent of the choice of $Y$ and $\pi$, depending only on the valuation
of $k(X)$ which corresponds to $E$.

We say that the log pair $(X,\Delta)$ is \textit{log canonical} (\textit{lc}, for short) if
$a(E;X,\Delta)\ge -1$ holds for all the divisors $E$ as above.
\end{defi}

A finer version is:
\begin{defi}
Let $x\in X$ be a point. We say that the log pair $(X,\Delta)$ is \textit{log canonical at $x$} if
the restriction of $(X,\Delta)$ to an open neighborhood of $x$ is log canonical.
\end{defi}

\begin{defi}[log canonical threshold]
Let $(X,\Delta)$ be a log canonical pair and $D$ be an effective $\mathbb{R}$-Cartier divisor on $X$. 
The \textit{log canonical threshold} of $D$ with respect to $(X,\Delta)$ is defined as follows:

$\lct(X,\Delta;D)=\sup\{t\in\mathbb{R}|(X,\Delta+tD)$ is log canonical$\}$.

For a point $x\in X$, we set

$\lct_{x}(X,\Delta;D)=\sup\{t\in\mathbb{R}|(X,\Delta+tD)$ is log canonical at $x$$\}$.
\end{defi}

It is easy to see by definition that "$\sup$" in the above definition is actually "$\max$".

When we consider the case $\Delta=0$, we write $\lct(X,\Delta;D)=\lct(X,D)$ for short (resp.
$\lct_{x}(X,\Delta;D)=\lct_{x}(X,D)$).


\subsection{Chow stability and the numerical criterion}\label{Chow stability and the numerical criterion}
Let $X\subset\mathbb{P}^n_{k}$ be an effective
$r$-dimensional cycle of degree $d$. 
We associate to $X$ its \textit{Chow divisor} $Z(X)$,
which is a hypersurface of degree $d$ of 
the Grassmannian $\mathbb{G}=\mathrm{Grass}_k(n-r,n+1)$, as follows
(one may consult either \cite{ko2} or \cite{gkz} for detail).
If $X$ itself is a variety, set $Z(X)=\{L\in\mathbb{G}|L\cap X\not=\phi\}$.
For a general cycle $X$, define $Z(X)$ additively. 
The defining equation of $Z(X)$ is called the \textit{Chow form}
of $X$ (Chow form is determined by $X$ only up to scalar multiplication).
The homogeneous coordinate ring of $\mathbb{G}$ with respect to the
Pl{\"u}cker embedding is denoted by $\mathcal{B}=\sum_{d\ge 0}\mathcal{B}_d$. This
is the subring of the polynomial ring of $(n+1)(n-r)$ indeterminants $U_i^{(j)}$'s, 
where $(i,j)$ runs through the range $i=0,\dots, n$ and $j=1,\dots, n-r$, generated
by all the $(n-r)\times(n-r)$ minors of the matrix $(U_i^{(j)})$.
The Chow form of a cylce $X$ is an element of $\mathcal{B}_d$ (up to scalar multiplication), so that
the Chow divisor $Z(X)$ of $X$ can be regarded as an element of the projective space
$\mathbb{P}_{*}\mathcal{B}_d$.
The canonical action of $SL(n+1,k)$ on $\mathbb{P}^n_{k}$ naturally induces a linear 
action on $\mathcal{B}_d$, hence we can discuss the GIT (semi-)stability of an element
of $\mathbb{P}_{*}\mathcal{B}_d$ (here we are using the terminology ``stable'' in the sense 
of ``properly stable'' in \cite{git}, which requires the finiteness of the stabilizer subgroup.
We heavily rely on the numerical criterion, so we follow this terminology\footnote{The
author would like to thank Dr. S. Ma for this remark.}).
 Chow (semi-)stability of $X$ is defined to be the 
(semi-)stability of $Z(X)$ in the above sense.

Next we recall the \textit{Hilbert-Mumford numerical criterion} (\textit{numerical criterion},
for short) for
stability and obtain an explicit description of the numerical function $\mu$ following
\cite[Proposition 2.3]{git}. We start with some preparations.

For a non-negative integer $n$, set
\begin{equation}\label{def of [n]}
[n]=\{0,1,\dots,n\}.
\end{equation}
For a subset $I\subset [n]$ with $\#I=n-r$, let $\Delta_I$ be the $(n-r)\times(n-r)$ minor
of the matrix $(U_i^{(j)})$  obtained by picking out the $n-r$ rows according to $I$.
Recall that $\mathcal{B}_d$ is a $k$-vector space generated by the set
$\{\Delta_{I_1}\dots\Delta_{I_d}|I_{\ell}\subset [n], \#I_{\ell}=n-r$ for all $\ell=1,\dots,
d\}$.

Now fix $X$. Take any $g\in SL(n+1,k)$ and let $F$ be the Chow form of $g^{*}X$
($=$ the defining equation of $g^{*}Z(X)$).
Set 
\begin{equation}\label{R}
\mathcal{R}=\{\vec{r}=(r_0,\dots,r_n)\in\mathbb{Z}^{n+1}\setminus \{0\}|
\sum_{i=0}^{n}r_i=0, \ \ r_0\le r_1\le\dots\le r_n\}.
\end{equation}
 An element $\vec{r}$ of $\mathcal{R}$ corresponds to a non-trivial one-parameter subgroup
(1-PS for short) $\lambda:\mathbb{G}_m\to SL(n+1,k)$ of $SL(n+1,k)$
which is defined by $\lambda(t)=\diag(t^{r_0},
\dots,t^{r_n})$. If we regard $\mathcal{B}_d$ as a representation of $\mathbb{G}_m$
via $\lambda$,
the one dimensional subspace of $\mathcal{B}_d$ spanned by $\Delta_{I_1}\cdots\Delta_{I_d}$
is an eigenspace of the weight 
\begin{equation*}
\wt(I_1,\dots,I_d)
=\sum_{\ell=1}^{d}\sum_{i\in I_{\ell}}r_i
=\sum_{i=0}^{n}r_i\cdot\#\{\ell|i\in I_{\ell}\}.
\end{equation*}

Using these notations, the numerical function of $X$ with respect to $g\in SL(n+1,k)$ and $\vec{r}\in\mathcal{R}$
is defined as follows:
\begin{defi}[numerical function]\label{numerical function}
Let $\mathcal{I}(F)$ be the set of such $d$-tuples $(I_{1},\dots,I_{d})$ that
the coefficient of $\Delta_{I_1}\cdots\Delta_{I_d}$ in $F$ is not zero. Then set
\begin{equation*}
\mu(Z(X),g,\vec{r})=\mu(V(F),\id,\vec{r})=\min_{(I_{1},\dots,I_{d})\in \mathcal{I}(F)}
\wt(I_1,\dots,I_d).
\end{equation*} 
\end{defi}

\begin{rem}\label{key remark}
$\mu(Z(X),g,\vec{r})$ depends only on $g,\vec{r}$ and the set $\mathcal{I}(F)$.
\end{rem}

Now the numerical criterion is:
\begin{prop}\label{nc}
$X$ is Chow stable (resp. semi-stable) if and only if 
$\mu(Z(X),g,\vec{r})<0$ (resp. $\le 0$) holds for any $g\in SL(n+1,k)$ and $\vec{r}\in\mathcal{R}$.
\end{prop}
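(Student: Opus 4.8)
The plan is to deduce Proposition \ref{nc} from the general Hilbert–Mumford numerical criterion, which is valid over an arbitrary algebraically closed field $k$ \cite[Theorem 2.1]{git}, by specializing it to the linearized action of $SL(n+1,k)$ on $\mathbb{P}_{*}\mathcal{B}_d$ and making the quantification over one-parameter subgroups explicit. Recall that in Mumford's formulation one attaches to a point and a nontrivial $1$-PS $\rho:\mathbb{G}_m\to SL(n+1,k)$ a numerical invariant $\mu^{\mathrm{Mum}}$ equal to \emph{minus} the smallest $\rho$-weight occurring in a lift of the point, and that the point is stable (resp.\ semi-stable) precisely when $\mu^{\mathrm{Mum}}>0$ (resp.\ $\ge 0$) for every such $\rho$. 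Writing $F_0$ for a Chow form of $X$, so that $Z(X)=[F_0]\in\mathbb{P}_{*}\mathcal{B}_d$, the task is to translate this eigenspace computation into the combinatorial quantity $\wt(I_1,\dots,I_d)$ and to match the sign conventions, so that positivity of $\mu^{\mathrm{Mum}}$ becomes the negativity of $\mu(Z(X),g,\vec r)$ asserted in the statement.

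First I would cut down the set of $1$-PS to be tested. Every nontrivial $1$-PS of $SL(n+1,k)$ is conjugate to a diagonal one $\lambda(t)=\diag(t^{s_0},\dots,t^{s_n})$ with $\sum_i s_i=0$, and conjugating further by a permutation matrix one may sort the exponents so that $s_0\le\cdots\le s_n$, i.e.\ so that $\vec r=(s_0,\dots,s_n)\in\mathcal{R}$. Thus any $\rho$ can be written as $\rho(t)=g^{-1}\lambda(t)g$ for some $g\in SL(n+1,k)$ and $\vec r\in\mathcal{R}$. Since the $\rho$-weights of $F_0$ coincide with the $\lambda$-weights of $g\cdot F_0$, one has $\mu^{\mathrm{Mum}}(F_0,\rho)=\mu^{\mathrm{Mum}}(g\cdot F_0,\lambda)$, and $g\cdot F_0$ is (up to scalar) the Chow form of the translate of $X$ by $g$, that is $[g\cdot F_0]=g^{*}Z(X)=V(F)$ in the notation of Definition \ref{numerical function}. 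Hence testing all $\rho$ against $[F_0]$ is the same as testing all diagonal $\lambda$ with $\vec r\in\mathcal{R}$ against all translates $V(F)$ as $g$ ranges over $SL(n+1,k)$.

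Next I would carry out the weight computation for a diagonal $\lambda$. The monomials $\Delta_{I_1}\cdots\Delta_{I_d}$ form an eigenbasis of $\mathcal{B}_d$, and the eigenvalue of such a monomial is $t^{\wt(I_1,\dots,I_d)}$, exactly as recorded just before Definition \ref{numerical function}. Therefore the smallest weight occurring in $F$ is $\min_{(I_1,\dots,I_d)\in\mathcal{I}(F)}\wt(I_1,\dots,I_d)=\mu(V(F),\id,\vec r)=\mu(Z(X),g,\vec r)$, so by the previous paragraph $\mu^{\mathrm{Mum}}(F_0,\rho)=-\mu(Z(X),g,\vec r)$. Feeding this into the general criterion, $X$ is Chow stable iff $-\mu(Z(X),g,\vec r)>0$, equivalently $\mu(Z(X),g,\vec r)<0$, for all $g\in SL(n+1,k)$ and $\vec r\in\mathcal{R}$; the semi-stable case is identical with the strict inequalities replaced by weak ones.

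Rather than a genuine obstacle, the points that require care are purely conventional: fixing the sign in $\mu^{\mathrm{Mum}}$ so that ``minimum weight'' matches the inequality direction in the statement, fixing the left/right convention for the $G$-action so that $g\cdot F_0$ is identified with the Chow form of $g^{*}X$, and verifying that restricting to $\vec r\in\mathcal{R}$ (integral, sum-zero, weakly increasing) together with the free parameter $g$ captures every $1$-PS up to the conjugation-invariance of $\mu^{\mathrm{Mum}}$. All three are settled by the diagonalization-and-sorting argument above, together with Remark \ref{key remark}, which ensures that $\mu$ depends only on the index set $\mathcal{I}(F)$; no further input is needed, and the proposition follows from \cite[Theorem 2.1]{git}.
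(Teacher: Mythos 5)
Your proposal is correct and matches the paper's approach: the paper states Proposition \ref{nc} without proof, presenting it as the specialization of the Hilbert--Mumford numerical criterion from \cite{git} to the linearized action on $\mathbb{P}_{*}\mathcal{B}_d$, which is exactly what you derive. The diagonalization-and-sorting reduction to $\vec r\in\mathcal{R}$, the identification of the $\lambda$-eigenbasis $\Delta_{I_1}\cdots\Delta_{I_d}$ with weights $\wt(I_1,\dots,I_d)$, and the sign bookkeeping are precisely the details the paper delegates to the reference.
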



Next we rephrase Proposition \ref{nc} in such a way as to prove Theorem \ref{yl}.
This reinterpretation is just a generalization of \cite[Lemma 2.1]{yl}.
Before that, we need some preparations.
Take an arbitrary $g\in SL(n+1,k)$ and let $F$ be the Chow form of $g^{*}X$.

Let $f$ be the local equation of $F$ on $D_{+}(\Delta_{[n-r-1]})\simeq \spec{
\mathcal{B}_{(\Delta_{[n-r-1]})}}$.
Recall that $\mathcal{B}_{(\Delta_{[n-r-1]})}$ is the polynomial ring over $k$ with the set of
indeterminants $\left\{x_{I}=\frac{\Delta_{I}}{\Delta_{[n-r-1]}}|I\right\}$,
where $I$ runs through those subsets of $[n]$ (see (\ref{def of [n]})) satisfying the following two conditions:
\begin{equation}\label{conditions on I}
\begin{split}
\#I&= n-r\\
\#(I\cap[n-r-1])&= n-r-1.\\
\end{split}
\end{equation}
Therefore $f$ is a polynomial in $x_I$'s.
Now assign nontrivial integral weights $\vec{r}=(r_0,\dots,r_n)\in\mathcal{R}$
to $X_0,\dots,X_n$, so that the induced weight $w(x_I)$ on $x_I$ satisfies
\begin{equation}\label{weight of x_I}
w(x_I)=\sum_{i\in I}r_i-\sum_{i=0}^{n-r-1}r_i,
\end{equation}
which is non-negative by the assumption $r_0\le r_1\le\dots\le r_n$.


Now Proposition \ref{nc} is equivalent to
\begin{lem}\label{nc by yl}
A cycle $X$ is Chow stable (resp. semi-stable) if and only if
\begin{equation}\label{eq of nc by yl}
\frac{w(f)}{\sum_{I}w(x_I)}<\frac{d}{n+1}
\end{equation}
(resp. $\le\frac{d}{n+1}$) holds for all $g\in SL(n+1,k)$ and $\vec{r}\in\mathcal{R}$ (see (\ref{R})
for the definition of $\mathcal{R}$).
\end{lem}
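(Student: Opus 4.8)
The plan is to reduce the statement to a single algebraic identity that rewrites the numerical function $\mu(Z(X),g,\vec r)$ of Definition \ref{numerical function} in terms of the two quantities $w(f)$ and $\sum_I w(x_I)$ appearing in (\ref{eq of nc by yl}), and then to invoke Proposition \ref{nc} verbatim. Throughout I fix $g\in SL(n+1,k)$ and $\vec r\in\mathcal R$, write $F$ for the Chow form of $g^{*}X$ and $f$ for its dehomogenization on $D_{+}(\Delta_{[n-r-1]})$, and abbreviate $w_0=\sum_{i=0}^{n-r-1}r_i$, which is the $\vec r$-weight of $\Delta_{[n-r-1]}$ and is the smallest weight among all $\Delta_I$ because $r_0\le\cdots\le r_n$. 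Here $w(f)$ means the least $\vec r$-weight among the monomials in the $x_I$ actually occurring in $f$.

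The core step is the identity
\[
w(f)=\mu(Z(X),g,\vec r)-d\,w_0.
\]
To prove it I would work not with the monomial expansion of $F$ (which is non-unique, because of the Pl\"ucker relations) but with the intrinsic weight-space decomposition of $\mathcal B_d$ under the $\mathbb{G}_m$-action through $\lambda$. Each product $\Delta_{I_1}\cdots\Delta_{I_d}$ is a weight vector of weight $\wt(I_1,\dots,I_d)$, so $\mathcal B_d=\bigoplus_w\mathcal B_d[w]$ and $\mu=\min\{w:F\text{ has a nonzero component in }\mathcal B_d[w]\}$. Dehomogenization $F\mapsto f=F/\Delta_{[n-r-1]}^d$ is the restriction of the section $F$ of $\mathcal O(d)$ to the dense open big cell $D_{+}(\Delta_{[n-r-1]})$; since $\mathbb{G}$ is integral this restriction is injective, and it is $\mathbb{G}_m$-equivariant up to the shift by the weight $d\,w_0$ of $\Delta_{[n-r-1]}^d$. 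Consequently it carries $\mathcal B_d[w]$ isomorphically onto the weight-$(w-d\,w_0)$ part of $\mathcal B_{(\Delta_{[n-r-1]})}$, so the weights occurring in $f$ are exactly those occurring in $F$ shifted by $-d\,w_0$; taking minima gives the displayed identity. The main obstacle is precisely this step: one must ensure that the minimal-weight part of $F$ does not cancel when $f$ is rewritten in the coordinates $x_I$, and the clean way around the Pl\"ucker relations is to phrase everything through the weight decomposition together with the injectivity of restriction to the big cell.

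Next I would evaluate $\sum_I w(x_I)$, the sum being over all $I$ satisfying (\ref{conditions on I}). Using (\ref{weight of x_I}), I count for each index $i$ the number of admissible $I$ containing it: an admissible $I$ omits one element of $[n-r-1]$ and adjoins one element of $\{n-r,\dots,n\}$, so an $i\in[n-r-1]$ lies in $(n-r-1)(r+1)$ of them, an $i\in\{n-r,\dots,n\}$ lies in $n-r$ of them, and the total number of admissible $I$ is $(n-r)(r+1)$. A short bookkeeping then gives
\[
\sum_I w(x_I)=-(r+1)\,w_0+(n-r)\sum_{i=n-r}^{n}r_i=-(n+1)\,w_0,
\]
where the last equality uses $\sum_{i=0}^n r_i=0$. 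Moreover $w_0\le 0$ (the sum of the $n-r$ smallest of $r_0,\dots,r_n$, whose total is $0$, is nonpositive), so $\sum_I w(x_I)\ge 0$; and equality would force $w(x_I)=0$ for every $I$, hence $r_b=r_a$ for all $a\in[n-r-1]$, $b\in\{n-r,\dots,n\}$, i.e. all $r_i$ equal and therefore $\vec r=0$, contrary to $\vec r\in\mathcal R$. Thus $\sum_I w(x_I)>0$ for every $\vec r\in\mathcal R$.

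Combining the two computations, $w_0=-\tfrac{1}{n+1}\sum_I w(x_I)$, and substituting into the identity of the second paragraph yields
\[
\mu(Z(X),g,\vec r)=w(f)-\frac{d}{n+1}\sum_I w(x_I)=\Big(\sum_I w(x_I)\Big)\left(\frac{w(f)}{\sum_I w(x_I)}-\frac{d}{n+1}\right).
\]
Since $\sum_I w(x_I)>0$, the sign of $\mu(Z(X),g,\vec r)$ agrees with that of $\frac{w(f)}{\sum_I w(x_I)}-\frac{d}{n+1}$; in particular $\mu<0$ (resp. $\le 0$) if and only if $\frac{w(f)}{\sum_I w(x_I)}<\frac{d}{n+1}$ (resp. $\le$). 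Letting $g\in SL(n+1,k)$ and $\vec r\in\mathcal R$ range over all possibilities, Proposition \ref{nc} then gives the asserted equivalence, which proves the lemma.
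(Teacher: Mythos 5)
Your proof is correct and follows essentially the same route as the paper's: both reduce the lemma to the identity $\mu(Z(X),g,\vec r)=w(f)+d\sum_{i=0}^{n-r-1}r_i$ together with the computation $\sum_I w(x_I)=-(n+1)\sum_{i=0}^{n-r-1}r_i$, and then invoke Proposition \ref{nc}. You are somewhat more explicit than the paper in two places --- justifying the first identity via the weight decomposition and the injectivity of restriction to the big cell, and checking $\sum_I w(x_I)>0$ before clearing denominators --- but the argument is the same.
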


Above $f$ is the local equation on $D_{+}(\Delta_{[n-r-1]})$ of the Chow form of $g^{*}X$ as before.
In the left hand side of (\ref{eq of nc by yl}),
$w(f)$ denotes the weighted multiplicity of $f$ (= the lowest weight of the monomials occurring in $f$)
with respect to the weight $\left(w(x_I)\right)_I$.

\begin{proof}

We only discuss the stable case. Semi-stable case can be proven similarly.

The inequality (\ref{eq of nc by yl}) is equivalent to
\begin{equation}\label{eq1 of pf of nc by yl}
d\sum_{I}w(x_I)-(n+1)w(f)>0.
\end{equation}
Combining the calculation of $w(x_I)$ (see (\ref{weight of x_I})) with
 the definition of $w(f)$, we see that the left hand side of (\ref{eq1 of pf of nc by yl})
 equals to
 
\begin{eqnarray*}
 d\left(\sum_{I}\sum_{i\in I}r_i-(n-r)(r+1)\sum_{i=0}^{n-r-1}r_i\right) 
 -(n+1)\left(\mu(X,g,\vec{r})-d\sum_{i=0}^{n-r-1}r_i\right).
\end{eqnarray*}

Recalling the conditions (\ref{conditions on I}) posed on $I$'s we see
\begin{equation*}
\sum_{I}\sum_{i\in I}r_i =
(n-r-1)(r+1)\sum_{i=0}^{n-r-1}r_i+(n-r)\sum_{i=n-r}^{n+1}r_i.
\end{equation*}

A little calculation shows that the left hand side of (\ref{eq1 of pf of nc by yl})
 boils down to

\begin{equation*}
d(n-r)\sum_{i=0}^{n}r_i-(n+1)\mu(X,g,\vec{r})
=-(n+1)\mu(X,g,\vec{r}),
\end{equation*}
since we assumed that $\sum_{i=0}^{n}r_i=0$.

Therefore
(\ref{eq of nc by yl}) is equivalent to the condition $\mu(X,g,\vec{r})<0$.

\end{proof}


\subsection{Chow stability in characteristic $p$ from characteristic zero}

Let $k$ be a field of characteristic $p>0$ and $X$ be a cycle in $\mathbb{P}^{n}_{k}$.
In this subsection we want to propose a method to deduce the
Chow (semi-)stability of $X$ from the corresponding results in characteristic zero.
 
From now on, we denote by $W=W(k)$ the ring of Witt vectors.
This is a discrete valuation ring (DVR for short) of characteristic zero, whose residue field
is isomorphic to $k$ (see \cite[Chapter 2 \S 5 Theorem 5]{s}). Actually these are all the
properties of $W$ which we need in this paper. We denote by $K$ the field of fractions of
$W$ and by $m_W$ the unique maximal ideal of $W$.

Take $g\in SL(n+1,k)$ and let $F$ be the Chow form of $g^{*}X$ as in the previous
subsection. Let $F_W$ be a lift of $F$ over $W$ such that
a monomial which does not appear in $F$ never appears in $F_W$,
which is equivalent to the assumption $\mathcal{I}(F)=\mathcal{I}(F_W)$
(see Definition \ref{numerical function} for the definition of $\mathcal{I}$).
 Note that $F_W$ defines a hypersurface $V(F_W)\subset
\mathrm{Grass}_{\overline{K}}(n-r,n+1)$ of degree $d$,
where $\overline{K}$ is the algebraic closure of $K$. 

\begin{theo}\label{red.technique}
Assume that for any $g\in SL(n+1,k)$ we can take $F_W$ such that 
$\mathcal{I}(F)=\mathcal{I}(F_W)$ holds and $V(F_W)$
is stable (resp. semi-stable) with respect to the induced action of $SL(n+1,\overline{K})$.
Then $X$ is Chow stable (resp. Chow semi-stable).
\end{theo}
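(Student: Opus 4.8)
The plan is to derive everything from the Hilbert--Mumford criterion of Proposition~\ref{nc}, using Remark~\ref{key remark} as the bridge between characteristic zero and characteristic $p$. By Proposition~\ref{nc}, to prove that $X$ is Chow stable (resp. semi-stable) it suffices to show that $\mu(Z(X),g,\vec{r})<0$ (resp. $\le 0$) for every $g\in SL(n+1,k)$ and every $\vec{r}\in\mathcal{R}$. So I would fix an arbitrary $g\in SL(n+1,k)$, let $F$ be the Chow form of $g^{*}X$, and invoke the hypothesis to produce a lift $F_W$ over $W$ with $\mathcal{I}(F)=\mathcal{I}(F_W)$ such that $V(F_W)$ is stable (resp. semi-stable) over $\overline{K}$.

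The core of the argument is a comparison of two numerical functions. Applying the numerical criterion over $\overline{K}$ to the stable (resp. semi-stable) hypersurface $V(F_W)$ and specializing to the one-parameter subgroup $h=\id$, I would obtain $\mu(V(F_W),\id,\vec{r})<0$ (resp. $\le 0$) for all $\vec{r}\in\mathcal{R}$. On the other hand, by Definition~\ref{numerical function} the quantity $\mu(V(F_W),\id,\vec{r})$ is the minimum of the integers $\wt(I_1,\dots,I_d)=\sum_{\ell}\sum_{i\in I_\ell}r_i$ over the finite index set $\mathcal{I}(F_W)$, a recipe that makes no reference to the ground field. Since $\mathcal{I}(F_W)=\mathcal{I}(F)$ by construction, this minimum equals $\mu(V(F),\id,\vec{r})=\mu(Z(X),g,\vec{r})$ --- which is exactly the field-independence recorded in Remark~\ref{key remark}. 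Therefore $\mu(Z(X),g,\vec{r})<0$ (resp. $\le 0$), and as $g$ ranges over $SL(n+1,k)$ and $\vec{r}$ over $\mathcal{R}$ this is precisely the condition of Proposition~\ref{nc}, giving the conclusion.

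The proof carries essentially no computational content, so the only point I would treat with care is this field-independence of $\mu$ together with the legitimacy of testing $V(F_W)$ only against the diagonal subgroup $h=\id$. The latter is harmless here: for each individual $g\in SL(n+1,k)$ I need just the single value $\mu(Z(X),g,\vec{r})$, and letting $g$ vary over all of $SL(n+1,k)$ already reproduces the full numerical criterion in characteristic $p$, so no non-diagonal $h\in SL(n+1,\overline{K})$ ever needs to be considered. Once this bookkeeping is settled, the identity $\mu(V(F_W),\id,\vec{r})=\mu(Z(X),g,\vec{r})$ is immediate, and the theorem follows.
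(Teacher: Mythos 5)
Your proposal is correct and follows exactly the paper's own argument: reduce to Proposition~\ref{nc}, note that $\mu(V(F_W),\id,\vec{r})<0$ (resp.\ $\le 0$) by the stability of $V(F_W)$ over $\overline{K}$, and use Remark~\ref{key remark} together with $\mathcal{I}(F)=\mathcal{I}(F_W)$ to transfer this to $\mu(Z(X),g,\vec{r})$. The point you flag explicitly --- that only the diagonal $1$-PS at $h=\id$ needs to be tested on $V(F_W)$ because $g$ already ranges over all of $SL(n+1,k)$ --- is left implicit in the paper but is exactly the right justification.
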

\begin{proof}
Since $F_W$ is (semi-)stable, $\mu(V(F_W),\id,\vec{r})<0$ (resp. $\le 0$)
holds for any $\vec{r}\in\mathcal{R}$ (see  (\ref{R}) in the previous subsection
for the definition of $\mathcal{R}$).
But it holds that $\mu(V(F_W),\id,\vec{r})=\mu(Z(X),g,\vec{r})$,
since $\mathcal{I}(F)=\mathcal{I}(F_W)$ (see Remark \ref{key remark}). Therefore
$\mu(Z(X),g,\vec{r})<0$ (resp. $\le 0$) holds for all $g\in SL(n+1,k)$ and $\vec{r}\in\mathcal{R}$, hence
we see the Chow (semi-)stability of $X$ by Proposition \ref{nc}.
\end{proof}
\begin{rem}
By the result of C. S. Seshadri (\cite[Proposition 6]{sesh}, see also \cite[Appendix to Chapter 1, \S G]{git}),
the converse of Theorem \ref{red.technique} also holds: if $X$ is Chow stable (resp. Chow
semi-stable), any lift $F_W$ of $F$ is also stable (resp. semi-stable) with respect to the
induced action of $SL(n+1,\overline{K})$.

\end{rem}


\section{Chow stability of non-singular hypersurfaces}\label{Chow stability of non-singular hypersurfaces}
In this section $X$ denotes a hypersurface of degree $d$ in $\mathbb{P}^{n}_{k}$.

In \S\ref{A proof via lifting to characteristic zero}, we prove the stability of
non-singular hypersurfaces of degree at least three. This is an easy application of Theorem \ref{red.technique}.
In \S\ref{The defining equation} we study the complement of the locus of non-singular hypersurfaces
via geometric arguments. It turns out that the complement is an irreducible divisor and that some multiple of its
defining equation lifts to the usual discriminant in characteristic zero.

\subsection{A proof via lifting to characteristic zero}\label{A proof via lifting to characteristic zero}

First of all we recall that the characteristic zero case of Theorem \ref{hp} was settled in \cite[Chapter 4 \S 2]{git}.
Thanks to a theorem by Matsumura and Monsky, the proof given there also works for 
characteristic $p$ cases if $p$ does not divide $d$. We briefly recall the proof
and see why it does not work for the cases when $p$ do divide $d$.

Let $F(X_0,X_1,\dots,X_n)$ be a homogeneous polynomial of degree $d$. We have the
Euler's lemma:
\begin{equation*}
dF=\sum_{i=0}^{n}X_i\frac{\partial F}{\partial X_i}.
\end{equation*}

Therefore we see that 
\begin{equation}\label{partial der}
V\left(F,\frac{\partial F}{\partial X_0},\frac{\partial F}{\partial X_1},
\dots,\frac{\partial F}{\partial X_n}\right)=
V\left(\frac{\partial F}{\partial X_0},\frac{\partial F}{\partial X_1},
\dots,\frac{\partial F}{\partial X_n}\right),
\end{equation}
 provided that $p$ does not divide $d$.
The emptiness of the latter is equivalent to the vanishing of the discriminant of
$F$ when $d\ge 2$. This shows the semi-stability of 
non-singular hypersurfaces of degree at least $2$. Furthermore, 
when $d\ge 3$,
it is known (see \cite[Theorem 1]{mm}) that only finitely many projective linear transformations 
preserve the given non-singular hypersurface. This means that any 
non-singular hypersurface is stable, provided $d\ge 3$ and $p\!\not| d$.

The above argument does not work in general, for the equality (\ref{partial der}) may
break down when $p$ divides $d$. Actually when $p$ divides $d$ and $d\ge 3$, the right hand side of
the equality (\ref{partial der}) can not be empty. This will be proven in the next subsection
(see Proposition \ref{non-emptiness}).

Even when $p$ divides $d$, a closer look at the numerical criterion
shows that non-singular hypersurfaces are always (semi-)stable if $d>n+1$ (resp. $d\ge n+1$)
(see \cite[Lemma 4.2]{n}. This may also be deduced from Theorem \ref{yl},
since the pair $(\mathbb{P}^n_{k},X)$ is log canonical when $X$ is a non-singular hypersurface).

Now we prove that the stability is always the case:
\begin{theo}\label{hp}
If $d\ge 3$, any non-singular projective hypersurface of degree $d$ is 
Chow stable.
\end{theo}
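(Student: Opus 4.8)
The strategy is dictated by the machinery the paper has just assembled: Theorem~\ref{red.technique} reduces Chow stability in characteristic $p$ to Chow stability of a suitable lift over the algebraically closed field $\overline{K}$ of characteristic zero, where Theorem~\ref{hp} is already known (Mumford's original result). So my plan is \emph{not} to grapple directly with the numerical criterion when $p \mid d$, but instead to lift the hypersurface to $W = W(k)$ and apply the characteristic-zero case to the geometric generic fiber.

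Concretely, let $X = V(F) \subset \mathbb{P}^n_k$ be a non-singular hypersurface of degree $d$, with $F \in k[X_0,\dots,X_n]_d$. First I would fix an arbitrary $g \in SL(n+1,k)$ and replace $X$ by $g^*X = V(g^*F)$; it suffices to produce, for each such $g$, a lift satisfying the hypotheses of Theorem~\ref{red.technique}. The key point is that I may choose the lift $F_W \in W[X_0,\dots,X_n]_d$ to be a lift of the defining polynomial $g^*F$ that reuses exactly the same monomials, i.e. I lift each nonzero coefficient arbitrarily to $W$ and keep every absent monomial absent. Passing to the Chow form is compatible with this, so the corresponding Chow form lift satisfies $\mathcal{I}(F) = \mathcal{I}(F_W)$ automatically. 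It then remains to verify that the geometric generic fiber $V(F_W) \subset \mathbb{P}^n_{\overline{K}}$ is a non-singular hypersurface of degree $d$, for then Mumford's theorem gives its Chow stability, and Theorem~\ref{red.technique} concludes that $X$ is Chow stable.

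The crux is therefore the following upper-semicontinuity statement: if the closed fiber $V(F_W \bmod m_W) = X$ is non-singular over $k$, then the geometric generic fiber is non-singular over $\overline{K}$. This is exactly the point flagged in the introduction (``the singularity of the hypersurface over the generic point is better than that of the special fiber''). I would argue it via the relative singular locus: the non-smooth locus of the family $V(F_W) \to \spec W$ is a closed subscheme, and since $W$ is a DVR with closed point defined by $m_W$, a fiber over the generic point can only be \emph{better} (fewer singularities) than the special fiber. Since the special fiber is smooth, the singular locus misses the closed fiber, hence by properness of the family it is empty, so the generic fiber—and thus the geometric generic fiber—is smooth as well. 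The hypothesis $d \geq 3$ is needed not here but in Mumford's theorem itself, where it guarantees via \cite{mm} that the stabilizer is finite, i.e.\ properly stable rather than merely semi-stable.

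The main obstacle I anticipate is making the semicontinuity step fully rigorous without assuming $p \nmid d$: one must not invoke the Jacobian/discriminant description of smoothness (equation~(\ref{partial der})), precisely because that description degenerates when $p \mid d$. Instead the argument must rest on the intrinsic smoothness of the morphism $V(F_W) \to \spec W$ as a flat family, using that non-smoothness is a closed condition on the total space and specializing to $m_W$. A secondary subtlety is confirming that the lift $F_W$ can genuinely be chosen with the \emph{same} support as $g^*F$ while keeping the total space flat over $W$; this is immediate because $W \to k$ is surjective on the relevant coefficient, and flatness over a DVR is automatic once the family is fibered in degree-$d$ hypersurfaces with no embedded components. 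With these two points settled, the proof is a short application of Theorem~\ref{red.technique}.
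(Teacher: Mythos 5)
Your proposal is correct and follows essentially the same route as the paper: lift $g^*F$ to $W$ with the same support, observe that smoothness of the closed fiber forces smoothness of the geometric generic fiber (the paper cites EGA IV 12.2.4(iii) for exactly your closedness-of-the-non-smooth-locus argument), and conclude via Theorem~\ref{red.technique} and Mumford's characteristic-zero theorem. The only cosmetic difference is that the paper secures flatness of $V(F_W)\to\spec W$ by first proving $F_W$ is irreducible (so $V(F_W)$ is integral and dominates $\spec W$), whereas you invoke the torsion-freeness criterion directly; both are valid.
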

\begin{proof}
The theorem is already established when $\ch k=0$, so we assume $\ch k>0$.
We use Theorem \ref{red.technique}.
Let $X\subset\mathbb{P}^{n}_k$ be an non-singular projective hypersurface of 
degree at least three. 
Take any $g\in SL(n+1,k)$ and let $F_k$ be the equation of $g^{*}X$.
Note that in this case $F_k$ itself is the Chow form of $g^{*}X$.
Take a lift $F_W$ of $F_k$ over the ring of Witt vectors $W$ satisfying
$\mathcal{I}(F_k)=\mathcal{I}(F_W)$
(see Definition \ref{numerical function} for the definition of $\mathcal{I}$).
Then it is easy to see the
\begin{claim}
$V(F_W)$ is an integral scheme.
\end{claim}
\begin{proof}
Since $W$ is a DVR, $W[X_0,\dots,X_n]$ is a UFD.
So it is enough to show that $F_W$ is an irreducible element of $W[X_0,\dots,X_n]$.
Suppose for a contradiction that $F_W=G\cdot H$ holds for some $G,H\in W[X_0,\dots,X_n]$
such that neither $G$ nor $H$ is a unit. Note that both $G$ and $H$ are homogeneous and non-zero, since
$F_W$ is.  Therefore $G, H$ are homogeneous polynomials of degrees at least one, since neither of them
is a unit. This means that either $\overline{G}=0$ or $\deg{\overline{G}}=\deg{G}$ (here $\overline{G}$ denotes the
reduction modulo $m_W$ of $G$) must hold (similar
for $H$).
On the other hand, $\overline{G}\cdot\overline{H}=\overline{F_W}=F_k\not=0$
holds. Therefore $\deg{\overline{G}}=\deg{G}\ge 1$ (resp. $\deg{\overline{H}}\ge 1$),
contradicting the irreducibility of $F_k$.
\end{proof}

Since $V(F_W)$ dominates the generic point of $\spec{W}$, the above claim means that
$V(F_W)$ is flat over $\spec W$ (see \cite[Chapter III, Proposition 9.7]{ha}). Also it is projective over $\spec{W}$.

The closed fiber of $V(F_W)\to \spec W$ is $g^{*}X$,
which is non-singular. Therefore the geometric generic fiber is also non-singular
(see \cite[(12.2.4)(iii)]{ega}). Since the characteristic of the generic fiber is zero and $\deg(F_W)\ge 3$,
we already know that it is stable. By Theorem \ref{red.technique}, we see that $X$ is stable too.
\end{proof}


\subsection{The defining equation}\label{The defining equation}
Let $\mathrm{Hyp}_{d}(n)$ be the projective space of degree $d$ hypersurfaces in $\mathbb{P}^{n}_{k}$,
and $\mathrm{U}_{ns}\subset \mathrm{Hyp}_{d}(n)$ be the locus of non-singular hypersurfaces. 
In this subsection we study the defining equation for the complement of the locus of
non-singular hypersurfaces, $\mathrm{Hyp}_{d}(n)\setminus \mathrm{U}_{ns}$, via geometric arguments.
This is a version of the arguments given in \cite[Chapter 5 \S 2]{mukai}.

The defining equation is well-known when
$p$ does not divide $d$, the discriminant. Therefore we are interested in the cases when $p$ divide $d$. 

Recall that the non-singularity of $X=V(F)$ is
equivalent to the emptiness of the left hand side of (\ref{partial der}).
Using this, we show the following

\begin{theo}\label{complement}
Assume $p$ divides $d$. Then
\begin{equation*}
\mathrm{Hyp}_{d}(n)\setminus \mathrm{U}_{ns}
\end{equation*}
is an irreducible divisor. Moreover some multiple of its defining equation lifts to the
discriminant in characteristic zero.
\end{theo}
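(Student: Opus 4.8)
The plan is to carry out the incidence-variety argument of \cite[Chapter 5 \S 2]{mukai}, isolating the two places where the hypothesis $p\mid d$ forces a change. Set $N=\dim\mathrm{Hyp}_{d}(n)$ and introduce the incidence scheme
\[
\Sigma=\{([F],x)\in\mathrm{Hyp}_{d}(n)\times\mathbb{P}^{n}_{k}\mid x\in\sing V(F)\},
\]
cut out by $F(x)=0$ together with $\frac{\partial F}{\partial X_{i}}(x)=0$ for all $i$. Projecting to $\mathbb{P}^{n}_{k}$, each fibre is the linear subspace of $\mathrm{Hyp}_{d}(n)$ defined by these conditions at the fixed point $x$. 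Here is the first point where $p\mid d$ intervenes: at $x=[1:0:\dots:0]$ the equation $\frac{\partial F}{\partial X_{0}}(x)=0$ reads $d\cdot a_{X_{0}^{d}}=0$, which is now vacuous, so Euler's lemma no longer makes $F(x)=0$ redundant and the condition $F(x)=0$ must be retained. A direct count at this point gives exactly $n+1$ independent linear conditions, and by homogeneity of the $PGL$-action this holds at every $x$, so every fibre is a $\mathbb{P}^{N-n-1}$. Hence $\Sigma$ is the projectivisation of a vector bundle over $\mathbb{P}^{n}_{k}$, in particular irreducible of dimension $N-1$.

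Next I would project $\Sigma$ to $\mathrm{Hyp}_{d}(n)$. Its image is exactly $\mathrm{Hyp}_{d}(n)\setminus\mathrm{U}_{ns}$, and being the image of an irreducible scheme it is irreducible. To see that it is a divisor it suffices to prove that this projection is generically finite, which follows once one exhibits a single hypersurface of degree $d$ with an isolated singular point; a hypersurface with one ordinary double point does the job, and such a hypersurface exists in every characteristic (in characteristic two one uses a nondegenerate quadratic form with a mixed term in place of a sum of squares for the local model). Upper semicontinuity of fibre dimension then forces the generic fibre of the projection to be finite, so $\dim\left(\mathrm{Hyp}_{d}(n)\setminus\mathrm{U}_{ns}\right)=\dim\Sigma=N-1$. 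Thus the complement is an irreducible divisor; write $\delta$ for its reduced defining equation.

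For the second assertion I would compare $\delta$ with the classical discriminant $\mathrm{Disc}_{d}$, normalised to a primitive element of $\mathbb{Z}[a_{\alpha}]$ (the coefficients of the universal form), working over $W$. Because $\mathrm{Disc}_{d}$ is primitive it is not divisible by the uniformiser, so its reduction $\overline{\mathrm{Disc}_{d}}$ is a non-zero element of the residue ring and $V(\mathrm{Disc}_{d})\subset\mathbb{P}^{N}_{W}$ is flat over $\spec W$. The same incidence construction over $W$ produces a scheme $\Sigma_{W}\to\mathbb{P}^{n}_{W}$ which, by the fibre computation above (valid on both fibres, since in characteristic zero Euler's lemma merely restores the redundancy of $F(x)=0$), is again a projective bundle; hence $\Sigma_{W}$ is irreducible and equidimensional over $\spec W$. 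The closure of its image under the first projection is therefore an irreducible relative divisor $\mathcal{S}_{W}$ whose generic fibre is the characteristic-zero discriminant locus and whose special fibre is supported on the irreducible divisor $V(\delta)$.

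Finally, since $\mathbb{P}^{N}_{W}$ is regular, hence locally factorial, the reduced irreducible divisor $\mathcal{S}_{W}$ is cut out by a single prime element $D_{W}\in W[a_{\alpha}]$. Over $K$ it generates the same ideal as the irreducible polynomial $\mathrm{Disc}_{d}$, so after rescaling $D_{W}$ and $\mathrm{Disc}_{d}$ agree up to a unit of $W$; reducing modulo $m_{W}$ and using that the special fibre of $\mathcal{S}_{W}$ is set-theoretically $V(\delta)$ yields $\overline{\mathrm{Disc}_{d}}=(\text{unit})\cdot\delta^{m}$ for some $m\ge 1$. Thus $\delta^{m}$, a multiple of $\delta$, lifts to the discriminant, which is the claim; the integer $m$ may exceed one precisely because $p\mid d$. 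The main obstacle throughout is this degeneration: with $d\equiv 0$ the resultant of the partial derivatives vanishes identically modulo $p$, so one cannot read the discriminant off the partials and is forced to transport the irreducible characteristic-zero discriminant across the special fibre by the flatness argument above. The crux is to rule out extra components of the special fibre, which is exactly what the equidimensionality of $\Sigma_{W}$ over $\spec W$ guarantees.
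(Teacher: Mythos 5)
Your proof is correct and follows essentially the same route as the paper: the same incidence scheme over $\mathbb{P}^{n}$, the same computation of the fibre over $(1:0:\cdots:0)$ showing it is a linear space of codimension $n+1$ in every characteristic, and the same passage to the ring of Witt vectors to identify the image with the characteristic-zero discriminant locus. You merely supply details the paper leaves implicit (generic finiteness via a hypersurface with an isolated singular point, and the factoriality/primitivity argument giving $\overline{\mathrm{Disc}_d}=(\text{unit})\cdot\delta^{m}$), and these details are sound.
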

\begin{exmp}(See \cite[Chapter 10 \S 2]{dol} for detail.)
Consider the case $(n,d)=(1,4)$. Let
\begin{equation*}
X=V(F), F=a_0X_0^4+a_1X_0^3X_1+a_2X_0^2X_1^2+a_3X_0X_1^3+a_4X_1^4
\end{equation*}
be an hypersurface in $\mathbb{P}^{1}_{k}$.
When $\ch{k}\not=2$, the defining equation for $Hyp_{4}(1)\setminus \mathrm{U}_{ns}$ is
given by $D=4S^3-T^2$, where
\begin{eqnarray*}
S&=&2^2\cdot 3a_0a_4-3a_1a_3+a_2^2\\
T&=&2^3\cdot 3^2a_0a_2a_4-3^3a_0a_3^2+3^2a_1a_2a_3-3^3a_1^2a_4-2a_2^3.\\
\end{eqnarray*}
When $\ch{k}=2$, $D\mod 2=(T\mod 2)^2$ and the defining equation for
$Hyp_{4}(1)\setminus U$ is given by
$T\mod 2=a_0a_3^2+a_1a_2a_3+a_1^2a_4$.
\end{exmp}

\begin{proof}[Proof of Theorem \ref{complement}]
Let $W=W(k)$ be the ring of Witt vectors. Set
\begin{equation*}
I=\left\{(x,X);x\in V\left(F,\frac{\partial F}{\partial X_0},\frac{\partial F}{\partial X_1},
\dots,\frac{\partial F}{\partial X_n}\right)\right\}\subset\mathbb{P}^{n}_{W}\times_{\spec{W}}
\mathrm{Hyp}_{d}(n),
\end{equation*}
where $\mathrm{Hyp}_{d}(n)=|\mathcal{O}_{\mathbb{P}^{n}_{W}}(d)|$ is the projective space of
families of degree $d$ projective hypersurfaces over $\spec W$. Let $p:I\to\mathbb{P}^{n}_{W},
q:I\to \mathrm{Hyp}_{d}(n)$ be the natural projections. 

First we show the following
\begin{claim}
$p$ is a smooth morphism with connected fibers.
\end{claim}

\begin{proof}
Let $x:\spec{\Omega}\to\mathbb{P}^{n}_{W}$ be a geometric point, where $\Omega$ is an algebraically closed
field. By the definition of
$I$ above, it is easy to see that $I_x\subset \mathrm{Hyp}_{d}(n)_{\Omega}:=\mathrm{Hyp}_{d}(n)\times_{\spec{W}}{\spec\Omega}$ is a
linear subspace. 

Next we calculate the fiber $I_{(1:0:\cdots:0)}$, where $(1:0:\cdots:0)\in
\mathbb{P}^{n}_{\Omega}$.
Note that if we write
\begin{equation*}
F(X)=\sum_{|\alpha|=d}C_{\alpha}X^{\alpha}
\end{equation*}
by using multi-indices, $(C_{\alpha};|\alpha|=d)$ gives a system of coordinates for the
projective space $\mathrm{Hyp}_{d}(n)_{\Omega}$. 
Then we can show the following equality (note that the equality is independent of the characteristic of $\Omega$):

\begin{equation*}
I_{(1:0:\cdots:0)}=V(C_{(d0\cdots 0)}, C_{((d-1)10\cdots 0)}, C_{((d-1)010\cdots 0)}, \dots, C_{((d-1)0\cdots 01)})\subset
\mathrm{Hyp}_{d}(n)_{\Omega}.
\end{equation*}

 In order to show that the dimension of the linear subspace $I_x$ is independent of $x$,
we show that it is isomorphic to $I_{(1:0:\cdots:0)}$.
Consider the action of $SL_{\Omega}(n+1)$ on $\mathbb{P}^{n}_{W}\times_{\spec{W}} \mathrm{Hyp}_{d}(n)_{\Omega}$
which is defined by $g\cdot (x,X)=(gx,g_{*}X)$ for $g\in SL(n+1,\Omega)$.
It can be easily checked that this action preserves $I\times_{\spec{W}}{\spec\Omega}$ and that we obtain an isomorphism between
$I_{(1:0:\cdots:0)}$ and $I_{x}$ via this action.
\end{proof}

By the claim we see that both $I$ and $I_{k}$, the restriction of $I$ over the closed point $\spec{k}\subset\spec{W}$,
are integral schemes.

Now consider the integral closed subscheme $q(I)\subset \mathrm{Hyp}_{d}(n)$. Note that
the defining equation for $q(I)$ is the usual discriminant, and that $q(I)_{k}$, restriction of
$q(I)$ over $\spec{k}\subset\spec{W}$, coincides with $q(I_{k})$ as sets.
\end{proof}

Similar arguments as above show the following

\begin{prop}\label{non-emptiness}
Assume $d$ is divided by $p$ and either $d\ge 3$ or $d=p=2$ and $n$ is even.
Let $X=V(F)\subset\mathbb{P}^{n}_{k}$ be an
arbitrary hypersurface of degree $d$. Then 
\begin{equation*}
V\left(\frac{\partial F}{\partial X_0},\frac{\partial F}{\partial X_1},
\dots,\frac{\partial F}{\partial X_n}\right)\not=\emptyset
\end{equation*}
holds.
\end{prop}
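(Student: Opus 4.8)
The plan is to turn the divisibility $p\mid d$ into a low-degree syzygy and play it against the structure of the syzygies of a regular sequence. Write $e:=d-1$ and $G_i:=\partial F/\partial X_i\in S:=k[X_0,\dots,X_n]$, each homogeneous of degree $e$. Since $p\mid d$, the integer $d$ is zero in $k$, so Euler's lemma becomes the polynomial identity $\sum_{i=0}^{n}X_iG_i=dF=0$ in $S$. Thus $(X_0,\dots,X_n)$ is a nonzero \emph{linear} syzygy of $(G_0,\dots,G_n)$, and the whole argument will extract a contradiction from the existence of a syzygy of such low degree.

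First I would reduce to a statement about regular sequences. The locus $V(G_0,\dots,G_n)\subset\mathbb{P}^{n}_{k}$ is empty exactly when the affine cone it spans is supported at the origin, i.e. when $S/(G_0,\dots,G_n)$ is $0$-dimensional; as $S$ is Cohen--Macaulay of dimension $n+1$ and there are exactly $n+1$ forms, this holds if and only if $G_0,\dots,G_n$ is a homogeneous system of parameters, equivalently a regular sequence. Hence it suffices to show that $G_0,\dots,G_n$ is \emph{not} a regular sequence, and I would argue by contradiction, assuming that it is.

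For a regular sequence the Koszul complex is a resolution of $S/(G_0,\dots,G_n)$, so the module of first syzygies is generated by the Koszul relations $K_{ij}=G_j\mathbf{e}_i-G_i\mathbf{e}_j$ (for $i<j$), whose nonzero entries have degree $e$. Grading the free module $S^{\oplus(n+1)}$ so that each standard generator sits in degree $e$, the Euler syzygy has internal degree $e+1=d$, while each $K_{ij}$ has internal degree $2e=2(d-1)$. Writing the Euler syzygy as $\sum_{i<j}a_{ij}K_{ij}$ therefore forces $\deg a_{ij}=d-2(d-1)=2-d$. When $d\ge 3$ this degree is negative, so all $a_{ij}=0$ and the Euler syzygy would vanish --- contradicting that its $X_0$-component is nonzero. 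When $d=p=2$ one has $e=1$, so $G=MX$ for a square matrix $M$ and the relation $\sum X_iG_i=0$ reads $X^{\top}MX=0$ identically; in characteristic $2$ this forces $M$ to be alternating (symmetric with vanishing diagonal). A regular sequence of linear forms must be linearly independent, i.e. $M$ invertible, but an alternating matrix of odd order $n+1$ (here $n$ even) is singular --- again a contradiction. In either case $G_0,\dots,G_n$ is not a regular sequence, so $V(\partial F/\partial X_0,\dots,\partial F/\partial X_n)\neq\emptyset$.

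The main obstacle is conceptual rather than computational: recognizing that the Euler relation supplies a syzygy of degree $d$, which is strictly below the degree $2(d-1)$ of the Koszul relations once $d\ge 3$, so a genuine regular sequence simply cannot accommodate it. The remaining delicate point is the boundary value $d=2$, where the degree count no longer forbids the syzygy; there the conclusion rests entirely on the parity input, via the singularity of odd-order alternating matrices in characteristic $2$, which is precisely why the hypothesis must exclude $d=p=2$ with $n$ odd.
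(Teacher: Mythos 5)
Your argument is correct, but it is a genuinely different proof from the one in the paper. The paper argues geometrically: it forms the incidence variety $Z=\{(x,X): x\in V(\partial F/\partial X_0,\dots,\partial F/\partial X_n)\}\subset\mathbb{P}^n_k\times \mathrm{Hyp}_d(n)$, uses the hypothesis $p\mid d$ to see that each fiber of $Z\to\mathbb{P}^n_k$ is a linear subspace of codimension only $n$ (the condition $\partial F/\partial X_0=0$ at $(1:0:\cdots:0)$ is automatic since $dC_{(d0\cdots0)}=0$), concludes $\dim Z=\dim\mathrm{Hyp}_d(n)$, and then proves $q:Z\to\mathrm{Hyp}_d(n)$ is surjective by checking generic finiteness on the explicit hypersurface $X_0^{d-1}X_1+\cdots+X_n^{d-1}X_0$, which is where the hypothesis ``$d\ge3$, or $d=p=2$ and $n$ even'' enters via the exponent $1-(1-d)^{n+1}$. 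Your route is purely algebraic: the Euler identity $\sum X_i\,\partial F/\partial X_i=dF=0$ is a linear syzygy of internal degree $d$, while for a regular sequence the Koszul resolution forces every first syzygy to live in internal degree at least $2(d-1)$, which exceeds $d$ once $d\ge3$; the boundary case $d=2$ is settled by the even rank of alternating matrices. What your approach buys is a shorter, self-contained proof that needs no explicit test hypersurface and that makes transparent why $d=p=2$ with $n$ odd must be excluded; what the paper's approach buys is that the same incidence construction simultaneously yields Theorem 3.3 (irreducibility of the discriminant divisor and its lift to characteristic zero), of which the Proposition is a by-product. Two small points to tighten: the reduction ``$V(G_0,\dots,G_n)=\emptyset$ in $\mathbb{P}^n$ iff $G_0,\dots,G_n$ is a regular sequence'' deserves the one-line justification that $n+1$ forms cutting out the irrelevant ideal up to radical are a system of parameters in the Cohen--Macaulay ring $S$; and for the claim that an alternating matrix of odd order is singular you should invoke the even rank of alternating bilinear forms (symplectic basis of the nondegenerate quotient) rather than the familiar $\det M=-\det M$ argument, which is vacuous precisely in characteristic $2$.
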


\begin{proof}
Set 
\begin{equation*}
Z=\left\{(x,X);x\in V\left(\frac{\partial F}{\partial X_0},\frac{\partial F}{\partial X_1},
\dots,\frac{\partial F}{\partial X_n}\right)\right\}\subset\mathbb{P}^{n}_{k}\times \mathrm{Hyp}_{d}(n)
\end{equation*}
and let $p:Z\to\mathbb{P}^{n}_{k}$ and $q:Z\to \mathrm{Hyp}_{d}(n)$ be the natural projections.
As in the proof of Theorem \ref{complement},
we can show the following
\begin{claim}
$p$ is a smooth morphism with connected fibers.
\end{claim}

Next we calculate the dimension of $Z_{(1:0:\cdots:0)}$. With the notations in the proof of Theorem \ref{complement},
we can write
\begin{equation*}
Z_{(1:0:\cdots:0)}=V(C_{((d-1)10\cdots 0)},C_{((d-1)010\cdots 0)},\dots,C_{((d-1)0\cdots 01)})\subset \mathrm{Hyp}_{d}(n).
\end{equation*}
Therefore 
\begin{eqnarray*}
\dim Z &=& \dim\mathbb{P}^{n}_{k}+\dim{Z_{(1:0:\cdots:0)}}\\
&=& n+(\dim \mathrm{Hyp}_{d}(n)-n)\\
&=& \dim \mathrm{Hyp}_{d}(n).\\
\end{eqnarray*}

Now all we have to show is that $q:Z\to q(Z)$ is generically finite, because then
we see that $\dim q(Z)=\dim Z=\dim \mathrm{Hyp}_{d}(n)$, hence $q(Z)=\mathrm{Hyp}_{d}(n)$.
In order to show it, we check the finiteness of the fiber of $q$ at
\begin{equation*}
F(X)=X_0^{d-1}X_1+X_1^{d-1}X_2+\cdots+X_{n-1}^{d-1}X_n+X_n^{d-1}X_0.
\end{equation*}

First of all, note that 
\begin{equation}\label{recursion}
\frac{\partial F}{\partial X_i}=0\iff
X_{i-1}^{d-1}=X_{i}^{d-2}X_{i+1}
\end{equation}
for all $i=0,1,\dots,n$, where $X_{-1}=X_n$ and $X_{n+1}=X_0$.
Suppose $a=(a_0:\cdots:a_n)\in V\left(\frac{\partial F}{\partial X_0},\frac{\partial F}{\partial X_1},
\dots,\frac{\partial F}{\partial X_n}\right)$.

From (\ref{recursion}), one see that $a_0\cdots a_n\not= 0$. Hence we assume $a_0=1$.
Using (\ref{recursion}) recursively, we obtain the following equation:
\begin{equation}\label{root of unity}
a_1^{1-(1-d)^{n+1}}=1.
\end{equation}
The exponent of $a_1$ above is non-zero under our assumptions on $(d,p,n)$. Therefore 
(\ref{root of unity}) poses a non-trivial condition on $a_1$. $a_2, a_3, \dots, a_{n}$ are
uniquely determined from $a_1$, because of $(\ref{recursion})$. Thus the finiteness is shown.
\end{proof}

\begin{rem}
When $d=p=2$ and $n$ is odd, we have the following counter-example:
\begin{equation*}
F=X_0X_1+X_2X_3+\cdots+X_{n-1}X_{n}.
\end{equation*}
It is easy to see that
$V\left(\frac{\partial F}{\partial X_0},\frac{\partial F}{\partial X_1},
\dots,\frac{\partial F}{\partial X_n}\right)=\emptyset$.

\end{rem}

\section{Y. Lee's criterion in characteristic $p$}\label{Y. Lee's criterion in characteristic $p$}
In this section we prove the following
\begin{theo}\label{yl}
Let $X$ be an effective cycle of dimension $r$ and degree $d$ in 
$\mathbb{P}^n_{k}$. Let $(\mathbb{G},Z(X))$ be the log pair defined by the
Chow divisor $Z(X)$ of $X$. If $\lct(\mathbb{G},Z(X))>\frac{n+1}{d}$
(resp. $\ge\frac{n+1}{d}$), then $X$ is Chow stable (resp. Chow semi-stable).
\end{theo}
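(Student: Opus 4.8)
The plan is to run everything through the numerical reformulation in Lemma \ref{nc by yl}, which says that $X$ is Chow stable (resp.\ semi-stable) if and only if $\frac{w(f)}{\sum_{I}w(x_I)}<\frac{d}{n+1}$ (resp.\ $\le\frac{d}{n+1}$) for every $g\in SL(n+1,k)$ and every $\vec{r}\in\mathcal{R}$, where $f$ is the local equation of the Chow form of $g^{*}X$ on the big cell $D_{+}(\Delta_{[n-r-1]})$ and the weights $w(x_I)$ are given by (\ref{weight of x_I}). So it is enough to establish, for every such $g$ and $\vec{r}$, the single inequality
\[
\lct(\mathbb{G},Z(X))\le\frac{\sum_{I}w(x_I)}{w(f)}.
\]
Indeed, granting this, the hypothesis $\lct(\mathbb{G},Z(X))>\frac{n+1}{d}$ gives $\frac{\sum_{I}w(x_I)}{w(f)}>\frac{n+1}{d}$, i.e.\ $\frac{w(f)}{\sum_{I}w(x_I)}<\frac{d}{n+1}$, which is exactly the criterion for stability; the semi-stable case is obtained verbatim by reading each inequality non-strictly.

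To prove the displayed inequality I would first note that it does not depend on $g$: since $SL(n+1,k)$ acts on $\mathbb{G}$ by automorphisms preserving the Pl\"ucker polarization, one has $Z(g^{*}X)=g^{*}Z(X)$ and hence $\lct(\mathbb{G},Z(g^{*}X))=\lct(\mathbb{G},Z(X))$. This reduces the problem to the affine chart $D_{+}(\Delta_{[n-r-1]})\cong\mathbb{A}^{N}$ (with $N=(n-r)(r+1)$ and affine coordinates the $x_I$), on which the Chow divisor of $g^{*}X$ is cut out by $f$. The geometric content of the inequality is that the monomial valuation attached to the weights $(w(x_I))_I$ — realized by the weighted blow-up of $\mathbb{A}^{N}$ with these weights — produces a single divisor $E$ over $\mathbb{G}$ with log discrepancy $\sum_{I}w(x_I)$ and with $\mathrm{ord}_E(f)=w(f)$. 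Plugging $E$ into the definition of log canonicity shows that $(\mathbb{A}^{N},tV(f))$ fails to be log canonical along $E$ once $t>\frac{\sum_{I}w(x_I)}{w(f)}$, which is precisely the asserted upper bound on the threshold. This discrepancy computation is toric and therefore combinatorial, so I would expect it to go through in any characteristic.

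The main obstacle is to carry out this last step in the form actually used in \cite{yl}. There the weighted situation is linearized by the finite Kummer-type cover $\phi\colon\mathbb{A}^{N}\to\mathbb{A}^{N}$, $y_I\mapsto y_I^{w(x_I)}$, which turns $E$ into an ordinary exceptional divisor and reduces the weighted computation to an honest blow-up of a point; the transfer of discrepancies then rests on the ramification formula $R_\phi=\sum_{I}(w(x_I)-1)\{y_I=0\}$ and on the invariance of log canonicity under $\phi$. In characteristic $p$ this collapses exactly when $p\mid w(x_I)$ for some $I$: the cover becomes inseparable and wildly ramified, the expression for $R_\phi$ is no longer correct, and log canonicity need not be preserved. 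The plan is to sidestep the wild ramification rather than analyse it. First I would prove the pullback property of log canonicity for finite \emph{separable} morphisms — the fragment that survives in characteristic $p$ — and then perturb: replace the integral $\vec{r}$ by nearby rational weight vectors whose induced weights $w(x_I)$ are all prime to $p$, so that the associated cover is separable and tame. The inequality holds for these perturbed weights, and since its right-hand side is continuous and homogeneous of degree zero in $\vec{r}$ while the left-hand side is the constant $\lct(\mathbb{G},Z(X))$, passing to the limit recovers it for the original $\vec{r}$. The delicate points are the separable pullback property and arranging a perturbation that makes all the relevant covers separable simultaneously; once these are in place the limiting argument is routine.
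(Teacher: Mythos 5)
Your proposal follows essentially the same route as the paper: reduce via Lemma \ref{nc by yl}, establish the key inequality through the Kummer-type cover $x_I\mapsto x_I^{w(x_I)}$ using the pullback of log canonicity under finite \emph{separable} morphisms (the paper's Proposition \ref{pull-back}), and handle weights divisible by $p$ by perturbation and continuity. The only cosmetic difference is that the paper packages the inequality as a standalone local statement about weighted multiplicities on $\mathbb{A}^n$ (Proposition \ref{kl}) and runs the density argument directly in the space of weights $(w(x_i))_i$ rather than in $\vec{r}$, which sidesteps your worry about arranging the perturbation simultaneously.
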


See \S\ref{Chow stability and the numerical criterion} for notations. Our proof goes along the same line as the original one by
Y. Lee (\cite{yl}), but we need to modify several points. 

Before the proof, we point out that we might prove Theorem \ref{yl} via Theorem \ref{red.technique} as in the previous section,
provided that the following conjecture would be true (below $W$ is the ring of Witt vectors and $K,k$ are the field of fractions
and the residue field of W, respectively):

\begin{conj}\label{lsc}
Let $X_W\to \spec W$ be a smooth proper morphism where $X_W$ is an integral scheme.
Let $D_W$ be an effective $\mathbb{R}$-divisor on $X_W$, such that no irreducible component
is contained in a fiber of the projection to $\spec W$. By $X_K$ and $D_K$ we denote the 
restrictions of $X_W$ and $D_W$ over the generic point of $\spec W$. Similarly $X_k,D_k$
denote the restrictions of $X_W$ and $D_W$ over the closed point of $\spec W$.
Then if $(X_k,D_k)$ is log canonical, so is $(X_K,D_K)$.
In particular $\lct(X_k,D_k)\le\lct(X_K,D_K)$.
\end{conj}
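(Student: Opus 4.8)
The plan is to combine \emph{inversion of adjunction} along the special fibre with a properness argument over $\spec W$. First I would reduce to the case where $D_W$ is a $\mathbb{Q}$-divisor: since the log canonical condition cuts out a closed, convex locus in the space of coefficients and the rational points are dense there, the real case follows by approximating $D_W$ from inside by $\mathbb{Q}$-divisors and using closedness. Throughout, the crucial structural fact is that $X_W$ is smooth over the regular base $\spec W$, so that $X_W$ is regular (hence normal and $\mathbb{Q}$-factorial) and the special fibre $X_k=\divis(t)$, for $t$ a uniformiser of $W$, is a smooth, hence normal, Cartier divisor on $X_W$. Because $D_W$ has no component contained in a fibre, adjunction is clean: $(K_{X_W}+X_k+D_W)|_{X_k}=K_{X_k}+D_k$ with $D_k=D_W|_{X_k}$, matching the notation of the conjecture.

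The heart of the argument is the implication
\[
(X_k,D_k)\ \text{lc}\ \Longrightarrow\ (X_W,X_k+D_W)\ \text{lc in a neighbourhood of}\ X_k,
\]
which is the nontrivial direction of inversion of adjunction. Granting it, one drops the effective boundary $X_k$ (this only raises discrepancies) to conclude that $(X_W,D_W)$ is log canonical in a neighbourhood of $X_k$. I would then observe that the open complement $U=X_W\setminus X_k$ is canonically the generic fibre $X_K$, and that for a divisorial valuation centred on $U$ the discrepancies computed for $(X_W,D_W)$ and for $(X_K,D_K)$ agree, since $K_{X_W}|_U=K_{X_K/K}$ and log canonicity is local on $U$. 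Hence $(X_K,D_K)$ is log canonical if and only if $(X_W,D_W)$ is log canonical along $U$.

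To pass from ``lc near $X_k$'' to ``lc along $U$'' I would invoke properness. Assuming the openness of the log canonical locus, the non--log-canonical locus $\mathrm{Nlc}(X_W,D_W)$ is closed, and by the previous step it is disjoint from $X_k$. Its image under the proper morphism $\pi\colon X_W\to\spec W$ is therefore a closed subset of $\spec W$ avoiding the closed point; but the only such closed subset is the empty set, so $\mathrm{Nlc}(X_W,D_W)=\emptyset$. In particular $(X_W,D_W)$ is log canonical along $U$, which gives the conclusion; the numerical inequality $\lct(X_k,D_k)\le\lct(X_K,D_K)$ then follows by applying the statement to the pairs $(X_W,tD_W)$ for $t\le\lct(X_k,D_k)$.

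The main obstacle is that both inputs—the openness of the log canonical locus and the hard direction of inversion of adjunction—are exactly the statements whose standard proofs rely on a log resolution of $(X_W,D_W)$ together with Kawamata--Viehweg--type vanishing (via Koll\'ar's connectedness theorem). In the mixed-characteristic setting $X_W/W$ neither tool is available in general: resolution of singularities is open in dimension $\ge 4$, and vanishing fails in residue characteristic $p>0$, precisely the phenomena (wild ramification, inseparable maps) flagged elsewhere in this paper. One natural way to attempt a bypass is to replace log canonicity by its conjectural $F$-singularity counterpart and use $F$-adjunction for test ideals in the style of Takagi; but this merely trades the problem for the open equivalence between log canonicity and dense $F$-pure type, so I expect the inversion-of-adjunction step to remain the genuine difficulty.
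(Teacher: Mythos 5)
The statement you are asked to prove is stated in the paper as Conjecture \ref{lsc}, and the paper offers no proof of it: it only remarks that the conclusion can be deduced, following Musta\c{t}\u{a}'s jet-scheme proof of lower semicontinuity of log canonical thresholds, \emph{under the additional hypothesis} that $(X_k,D_k)$ admits a log resolution which is an isomorphism outside $\supp D_k$ --- a hypothesis not known in positive characteristic in general. So any complete, unconditional argument would have to overcome an obstacle the author explicitly flags as open.

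Your proposed route (adjunction of $(X_W,X_k+D_W)$ to the special fibre, the hard direction of inversion of adjunction to get log canonicity of $(X_W,X_k+D_W)$ near $X_k$, then properness of $X_W\to\spec W$ to propagate log canonicity to all of $X_W$ and hence to $X_K=X_W\setminus X_k$) is a genuinely different strategy from the one the paper alludes to, and its outer skeleton is sound; in fact, once you have ``lc on an open neighbourhood $V\supset X_k$'' you do not even need openness of the lc locus, since $X_W\setminus V$ is already closed and misses the closed fibre, so its image in $\spec W$ is a closed subset missing the closed point, hence empty. The genuine gap is the inversion-of-adjunction step itself, which you correctly identify but do not close. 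The only known proofs of ``$(S,D|_S)$ lc $\Rightarrow$ $(X,S+D)$ lc near $S$'' go through Koll\'ar--Shokurov connectedness, which needs both a log resolution of $(X_W,X_k+D_W)$ (not available here, for the same reason the paper's conditional argument is conditional) and a Kawamata--Viehweg-type vanishing on that resolution, which fails over residue characteristic $p>0$; the $F$-adjunction substitute you mention trades this for the open comparison between log canonicity and $F$-purity. Since the conclusion of the conjecture is essentially equivalent to that step, the proposal is a reduction of the conjecture to an equally open statement rather than a proof. Two smaller points you should make explicit if you pursue this: the pair $(X_W,X_k+D_W)$ lives on a regular scheme over $W$ rather than a variety over a field, so the definitions of discrepancy and log canonicity in \S\ref{Notions of singularities} must first be extended to that setting; and the reduction from $\mathbb{R}$- to $\mathbb{Q}$-coefficients, while correct, is better phrased via the affine-linearity of $\Delta\mapsto a(E;X,\Delta)$ for each fixed divisor $E$ than via an unproved closedness of the lc locus in the space of coefficients.
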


Note that the above conjecture can be proven when  $(X_k,D_k)$ has a good log resolution
(here "good" means that it is isomorphic outside of the support of $D_k$),
following \cite{mus}. In \cite{mus},
the lower semi-continuity of log canonical thresholds in a family of projective log pairs with non-singular
ambient varieties is proven
when the base scheme of the family is defined in characteristic zero. We need the last assumption
because the existence of good log resolution is not yet
established in positive characteristics in full generality. Today basic results on
motivic integrations are established over arbitrary perfect fields (see \cite{y}), so the arguments in
\cite{mus} can be applied to our case without change under the existence of good log resolutions.


\subsection{Log canonicity in positive characteristics}

In this subsection, we discuss how the log canonicity of log pairs are preserved under finite
morphisms. Some properties of log canonicity which hold in characteristic zero fail in
characteristic $p>0$, but we can circumvent those difficulties and obtain Proposition \ref{kl},
which is the key for the proof Theorem \ref{yl}.

When the characteristic of the base field is zero, it is well known that the log canonicity is preserved
under finite dominant morphisms (see \cite[Proposition 5.20(4)]{km}). Namely:

\begin{theo}\label{lc in char zero}
Let $g:X'\to X$ be a finite dominant morphism of normal varieties over a field of characteristic zero.
Let $\Delta$ (resp. $\Delta^{'}$) be a $\mathbb{Q}$-divisor on $X$ (resp. $X'$) such that
$K_X+\Delta$ is $\mathbb{Q}$-Cartier and $g^{*}(K_X+\Delta)=K_{X'}+\Delta^{'}$. Then
$(X,\Delta)$ is log canonical if and only if $(X',\Delta')$ is.
\end{theo}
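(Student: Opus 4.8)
The plan is to reduce the statement to a pointwise comparison of discrepancies along divisorial valuations, and to relate these by the tame ramification formula, which is the only place where characteristic zero is essential. Recall that $(X,\Delta)$ is log canonical exactly when the \emph{log discrepancy} $a(E;X,\Delta)+1$ is $\ge 0$ for every prime divisor $E$ over $X$, and likewise for $(X',\Delta')$. Since $g$ is finite and dominant, the extension $k(X)\subset k(X')$ is finite, so every divisorial valuation $v'$ of $k(X')$ restricts to a divisorial valuation $v$ of $k(X)$ with $v'|_{k(X)}=e\cdot v$ for a positive integer $e$ (the ramification index), and conversely every divisorial valuation of $k(X)$ is dominated by at least one such $v'$. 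Thus it suffices to establish the single transformation formula
\begin{equation*}
a(E';X',\Delta')+1=e\cdot\bigl(a(E;X,\Delta)+1\bigr),
\end{equation*}
where $E'$ is the divisor over $X'$ attached to $v'$ and $E$ the one attached to $v$; the equivalence of log canonicity then follows immediately from $e>0$ together with the two-sided correspondence of valuations.

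To prove the formula I would compute everything in codimension one, so that no global resolution of $X'$ is needed. First choose a resolution $\pi\colon Y\to X$ extracting $E$ as a smooth prime divisor, and let $\tilde{Y}'$ be the normalization of $Y$ in $k(X')$, with induced finite morphism $h\colon\tilde{Y}'\to Y$ and birational $\pi'\colon\tilde{Y}'\to X'$. The valuation $v'$ then has center the generic point of a prime divisor $E'$ on $\tilde{Y}'$ lying over $E$, and at this point $\tilde{Y}'$ is regular, so $K_{\tilde{Y}'}$ is Cartier there and the coefficient of $E'$ is meaningful. Writing $K_Y=\pi^{*}(K_X+\Delta)+\sum_i a(E_i;X,\Delta)\,E_i$ and pulling back by $h$, the identity $\pi\circ h=g\circ\pi'$ together with the hypothesis $g^{*}(K_X+\Delta)=K_{X'}+\Delta'$ gives $h^{*}K_Y=\pi'^{*}(K_{X'}+\Delta')+\sum_i a(E_i;X,\Delta)\,h^{*}E_i$. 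Comparing this with $K_{\tilde{Y}'}=\pi'^{*}(K_{X'}+\Delta')+\sum_{E'}a(E';X',\Delta')\,E'$ reduces the formula to understanding $K_{\tilde{Y}'}-h^{*}K_Y$ along $E'$, which is exactly the ramification divisor of $h$.

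Here lies the crux, and the one genuinely characteristic-zero input: at the generic point of $E'$ the extension of discrete valuation rings $\mathcal{O}_{Y,E}\subset\mathcal{O}_{\tilde{Y}',E'}$ is \emph{tamely} ramified, so the different contributes the coefficient $e-1$, i.e.\ $K_{\tilde{Y}'}=h^{*}K_Y+(e-1)E'+\cdots$ near $E'$. Combining this with $h^{*}E=e\,E'+\cdots$ and matching the coefficient of $E'$ in the two expressions for $K_{\tilde{Y}'}$ yields $a(E';X',\Delta')=e\,a(E;X,\Delta)+(e-1)$, which rearranges to the desired formula. The main obstacle is precisely this tame ramification step: in positive characteristic wild ramification makes the different strictly larger than $e-1$, so the clean scaling of log discrepancies breaks down. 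This is the phenomenon that forces the modifications of the following subsections, where the formula is salvaged only for finite \emph{separable} morphisms and inseparability is avoided by a perturbation argument.
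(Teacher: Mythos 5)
Your argument is correct and is essentially the standard proof of \cite[Proposition 5.20(3)--(4)]{km}, which is exactly what the paper invokes here (it cites Koll\'ar--Mori rather than reproving the statement): the reduction to the log-discrepancy scaling $a(E';X',\Delta')+1=e\bigl(a(E;X,\Delta)+1\bigr)$ via normalization of a resolution in $k(X')$, with tame ramification supplying the coefficient $e-1$, is precisely the intended route. You also correctly isolate the tameness of the ramification as the step that fails in characteristic $p$, which is what the paper's Lemma on the weaker ramification formula and Proposition \ref{pull-back} are designed to repair.
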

We should note that the canonical divisors $K_X$ and $K_{X'}$ in Theorem
\ref{lc in char zero} above are chosen
in such a way that $K_{X'}=g^{*}K_X+R$ holds, where $R$ is the ramification divisor of $g$. 

When the characteristic of the base field is positive, we need to modify Theorem \ref{lc in char zero}.
First we consider the case when $g$ is separable. In this case we may have wild ramifications, so
we only have a weaker version of the ramification formula:

\begin{lem}
Let $g:X\to Y$ be a finite separable
morphism between normal varieties over $k$. Let $E\subset X$ be a prime divisor on $X$ and $r$ be the
ramification index of $g$ along $E$. Then there exists a non-negative integer $b\ge r-1$ such that
$K_X=g^{*}K_Y+bE$ holds around the generic point of $E$.
\end{lem}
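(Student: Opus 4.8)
The plan is to reduce the statement to a computation with differentials at the generic point $\eta_E$ of $E$. Since $k$ is perfect and $X,Y$ are normal, both are smooth over $k$ in codimension one; in particular the local rings $A=\mathcal{O}_{Y,g(\eta_E)}$ and $B=\mathcal{O}_{X,\eta_E}$ are DVRs, and $A\to B$ is a finite local extension inducing the separable function field extension $K=k(Y)\subset L=k(X)$. The integer $b$ we are after is the coefficient of $E$ in the ramification divisor $R=K_X-g^{*}K_Y$, i.e. the order of vanishing along $E$ of the natural map $dg\colon g^{*}\Omega_{Y/k}\to\Omega_{X/k}$ taken on top exterior powers.

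First I would establish that this really produces a non-negative integer. Because $L/K$ is separable we have $\Omega_{L/K}=0$, so after tensoring with $L$ the map $dg$ becomes an isomorphism of $n$-dimensional $L$-vector spaces (here I use that $k$ is perfect, so $\dim_L\Omega_{L/k}=\dim_K\Omega_{K/k}=n$). Hence $dg$ is injective with finite-length cokernel $\Omega_{X/Y}\otimes B=\Omega_{B/A}$, and passing to determinants gives an injection $g^{*}\omega_Y\hookrightarrow\omega_X$ whose order of vanishing along $E$ equals $v_B(\det dg)=\mathrm{length}_B\,\Omega_{B/A}$, by the theory of elementary divisors over the DVR $B$. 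This yields simultaneously $b\in\mathbb{Z}_{\ge 0}$ and the formula $K_X=g^{*}K_Y+bE$ around $\eta_E$.

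For the lower bound $b\ge r-1$ I would compute $\det dg$ concretely. Let $t$ be a uniformizer of $A$ (a local equation of $g(E)$) and $\pi$ a uniformizer of $B$; by the definition of the ramification index we may write $g^{*}t=u\pi^{r}$ with $u\in B^{\times}$. Then
\[
d(g^{*}t)=d(u\pi^{r})=\pi^{r-1}\bigl(ru\,d\pi+\pi\,du\bigr)\in\pi^{r-1}\,\Omega_{X/k}.
\]
Completing $t$ to elements $t=t_1,\dots,t_n$ for which $dt_1\wedge\dots\wedge dt_n$ is a local generator of $\omega_Y$ near $g(\eta_E)$, the image of this generator under $dg$ is $d(g^{*}t_1)\wedge\dots\wedge d(g^{*}t_n)$; since the first factor already lies in $\pi^{r-1}\Omega_{X/k}$, the whole wedge lies in $\pi^{r-1}\omega_X$. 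Therefore $v_B(\det dg)\ge r-1$, which is exactly $b\ge r-1$.

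The main obstacle — and the very reason the characteristic-zero equality $b=r-1$ must be weakened to an inequality — is that the residue field extension $k(E)/k(g(E))$ can be inseparable when the ramification is wild, so one cannot invoke the classical different computation that forces equality in the tame case. I would stress that separability is used only for the function field extension $L/K$ (to guarantee $\Omega_{L/K}=0$ and hence the injectivity of $dg$), whereas the inseparability that may persist in the residue extension is precisely what makes the term $\pi\,du$ above contribute extra vanishing and allows $b$ to exceed $r-1$.
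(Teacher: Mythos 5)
Your proof is correct and follows essentially the same route as the paper's: both use the exact sequence $g^{*}\Omega_{Y}\to\Omega_{X}\to\Omega_{X/Y}\to 0$, the separability of $k(X)/k(Y)$ to make the top exterior power of $dg$ injective, and the explicit computation $d(u\pi^{r})=\pi^{r-1}(ru\,d\pi+\pi\,du)$ to extract the factor $\pi^{r-1}$. The only (harmless, arguably cleaner) difference is that you work at the DVR $\mathcal{O}_{X,\eta_E}$ instead of at a generic closed point of $E$, which lets you avoid the paper's normalization $g^{*}y_i=x_i$ for $i\ge 2$ and instead just observe that one factor of the wedge already lies in $\pi^{r-1}\Omega_{X/k}$.
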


\begin{proof}
Set $V=Y\setminus \sing Y$ and $U=g^{-1}(V)-\sing X$. Note that the closed subsets we have through
away have codimension greater than $1$.
Over $U$ we have the following exact sequence:
\begin{equation}\label{1es}
0\to g^{*}\Omega_{V}\xrightarrow[]{f}\Omega_{U}\to\Omega_{U/V}\to 0.
\end{equation}
Since $g$ is separable, $\Omega_{U/V}$ generically vanishes (see \cite[Theorem 59]{mat}).
Hence $f$ is generically isomorphic. 

Let $F:g^{*}\mathcal{O}_V(K_V)\to\mathcal{O}_U(K_U)$ be the highest exterior product of
the morphism $f$ in (\ref{1es}) above. This is also generically isomorphic.
Therefore $\ker{F}$ is a torsion subsheaf of the torsion free sheaf
$g^{*}\mathcal{O}_V(K_V)$, so is trivial. Hence we see that $F$ is injective.

Take a generic closed point $e$ of $E\cap U$ which is contained in no other irreducible component
of $\supp{\Omega_{U/V}}$ except for $E$. Set $e'=g(e)$, $E'=g(E)$.
Choose systems of local coordinates $x_1,\dots,x_n$ at $e$ and $y_1,\dots,y_n$ at $e'$, satisfying
the following conditions:
\begin{enumerate}[(a)]
\item $E=\divis(x_1)$ near $e$ (resp. $E'=\divis(y_1)$).
\item $g^{*}y_i=x_i$ holds for all $i=2,\dots,n$.
\item there exists an invertible function $u$ at $e$ such that $g^{*}y_1=u\cdot x_1^{r}$.
\end{enumerate}
In (c), $r$ denotes the ramification index of $g$ along $E$. Now

\begin{eqnarray}
F(g^{*}(dy_1\wedge\dots\wedge dy_n)) &=& d(u\cdot x_1^r)\wedge dx_2\wedge dx_3
\wedge\dots\wedge dx_n \nonumber\\
&=& \left(\frac{\partial u}{\partial x_1}x_1+ru\right)x_1^{r-1}dx_1\wedge dx_2\dots
\wedge dx_n\label{ramif}, \\\nonumber
\end{eqnarray}

hence there exists some non-negative integer $b$ such that
\begin{eqnarray*}
K_X=g^{*}K_Y+bE \\
\end{eqnarray*}
holds in a neighborhood of $e$.

If $r\not\equiv 0 \pmod{p}$, $b=r-1$. Now assume that
$E$ is wildly ramifying. Then 
\begin{eqnarray*}
(\ref{ramif}) & =& \frac{\partial u}{\partial x_1}x_1^{r}dx_1\wedge dx_2\dots
\wedge dx_n \\
& \not=& 0, \\
\end{eqnarray*}
since otherwise $F$ is not generically isomorphic.
In this case we see that
\begin{equation*}
b=\mathop{\mathrm{val}}\nolimits_{E}\left(\frac{\partial u}{\partial x_1}\right)+r\ge r,
\end{equation*}
where $\mathop{\mathrm{val}}_{E}$ denotes the valuation corresponding to $E$.

\end{proof}

\begin{rem}
Ramification formula for inseparable morphisms are discussed in \cite{rs}. In this case
the ramification divisor is defined only up to linear equivalence. If we adopt this version
of ramification formula, the `only if' part of Theorem \ref{lc in char zero} does not
hold in general. For our purpose we need not to deal with inseparable cases. 
\end{rem}

With the weaker version of ramification formula above, we can prove that
the 'only if' part of the Theorem \ref{lc in char zero} still holds 
for separable morphisms:


\begin{prop}\label{pull-back}
Let $k$ be an algebraically closed field of characteristic $p>0$. Let $g:X'\to X$ be a finite separable
morphism of normal varieties over $k$. Let $\Delta$ (resp. $\Delta^{'}$) be a $\mathbb{Q}$-divisor on $X$ (resp. $X'$) such that
$K_X+\Delta$ is $\mathbb{Q}$-Cartier and $g^{*}(K_X+\Delta)=K_{X'}+\Delta^{'}$. Then
if $(X,\Delta)$ is log canonical, so is $(X',\Delta')$.
\end{prop}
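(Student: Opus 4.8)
The plan is to reduce the statement to a comparison of discrepancies under the finite separable morphism $g$, exactly as one does in characteristic zero, but using the weaker ramification formula just proved instead of the exact equality $K_{X'}=g^*K_X+R$. Concretely, to check that $(X',\Delta')$ is log canonical I must show $a(E';X',\Delta')\ge -1$ for every prime divisor $E'$ over $X'$. Since discrepancies are computed at the level of valuations and are insensitive to higher blow-ups, I would first reduce to the case where $E'$ is a divisor on $X'$ itself (a standard argument: replace $X'$ by a suitable normal modification, pull back $g$, and note that a separable morphism stays separable after such base change, so the hypotheses are preserved). Thus it suffices to bound the discrepancy of an honest prime divisor $E'\subset X'$.

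Next I would set $E=g(E')$, the image prime divisor on $X$, and let $r$ be the ramification index of $g$ along $E'$. By the Lemma just established, there is a non-negative integer $b\ge r-1$ with $K_{X'}=g^*K_X+bE'$ near the generic point of $E'$. Writing $\Delta=\sum \delta_i D_i$ and pulling back, the coefficient of $E'$ in $\Delta'=g^*(K_X+\Delta)-K_{X'}$ is
\begin{equation*}
\mathrm{coeff}_{E'}(\Delta')=r\cdot\delta_E-b,
\end{equation*}
where $\delta_E=\mathrm{coeff}_E(\Delta)$ (and $\delta_E=0$ if $E$ is not a component of $\Delta$). The log canonicity of $(X,\Delta)$ applied to the divisor $E$ gives $\delta_E\le 1$, i.e.\ $a(E;X,\Delta)=-\delta_E\ge -1$. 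I would then combine this with $b\ge r-1$ to estimate the discrepancy of $E'$: one finds
\begin{equation*}
a(E';X',\Delta')=-\mathrm{coeff}_{E'}(\Delta')=b-r\cdot\delta_E\ge (r-1)-r\cdot\delta_E=r(1-\delta_E)-1\ge -1,
\end{equation*}
using $\delta_E\le 1$ and $r\ge 1$. This is the crux: the inequality $b\ge r-1$ is precisely what is needed so that wild ramification only \emph{increases} discrepancies, which is why the `only if' direction survives in characteristic $p$ even though the exact ramification formula fails.

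The main obstacle is the reduction step that lets me assume $E'$ is a divisor on $X'$ rather than an exceptional divisor over it. In characteristic zero this is handled by passing to a common log resolution, but resolution of singularities is not available in positive characteristic in the generality needed. I would instead argue valuation-theoretically: given a divisorial valuation $\mathrm{val}_{E'}$ of $k(X')$, it restricts to a divisorial valuation of $k(X)$, and I can find a normal variety $Y'$ with a proper birational morphism $Y'\to X'$ on which $E'$ appears as a divisor, together with the induced finite separable morphism $Y'\to Y$ to a corresponding model of $X$; the key point is that discrepancies are preserved under proper birational pullback of the pair, so it is legitimate to compute on $Y'$. Care is needed to ensure the morphism $Y'\to Y$ is still finite and separable and that the relation $g^*(K_X+\Delta)=K_{X'}+\Delta'$ persists after these modifications, but no characteristic-zero-specific input (vanishing theorems, resolution) enters. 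Once the reduction is in place, the discrepancy computation above finishes the proof.
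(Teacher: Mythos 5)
Your proposal is correct and is essentially the paper's proof: the paper simply invokes the argument of Koll\'ar--Mori, Proposition 5.20(4), with the exact ramification formula replaced by the weaker inequality $b\ge r-1$ from the preceding Lemma, which is precisely the discrepancy computation $a(E';X',\Delta')=b-r\delta_E\ge r(1-\delta_E)-1\ge -1$ that you carry out. Your valuation-theoretic reduction (restricting $\mathrm{val}_{E'}$ to $k(X)$, passing to a model $Y$ where it is divisorial, and taking $Y'$ to be the normalization of $Y$ in $k(X')$) is the same device used there and correctly avoids any appeal to resolution of singularities.
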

\begin{proof}
The proof goes along the same line as the proof of (\cite[Prop 5.20(4)]{km}), once we replace
the ramification formula by the weaker version given above.
\end{proof}

\begin{rem}
In general, the 'if' part of Theorem \ref{lc in char zero} holds only when
there exists no wildly ramifying divisor. In such a case, the proof
goes as in characteristic zero. If some of the ramification divisors are wildly ramifying
it may not hold. An example is:

\begin{exmp}
Let $X=X'=\mathbb{A}^{1}_{k}$. Set $g:X'\to X; g(x)=x^p(x+1)$, $\Delta=\frac{p+1}{p}\divis (x)$
and $\Delta'=\divis (x)$. Since $g^{*}dx=x^pdx$, we obtain
\begin{equation*}
g^{*}(K_X+\Delta)=K_{X'}+\Delta'.
\end{equation*}
Note that $(X,\Delta)$ is not lc, but $(X',\Delta')$ is. 
\end{exmp}

\end{rem}

Using Proposition \ref{pull-back}, we can extend
\cite[Proposition 8.13]{sp} over arbitrary fields:


\begin{prop}\label{kl}
Take any $f\in k[x_1,\dots,x_n]$. Assign a weight $w=(w(x_i))_{i=1,\dots,n}\in{(\mathbb{Z}_{\ge 0})}^n
\setminus\{0\}$
to the variables $x_1,\dots,x_n$ and let
$w(f)$ be the weighted multiplicity of $f$ (= the lowest weight of the monomials occurring in $f$).
Then

\begin{equation*}
\frac{1}{\lct_{0}(\mathbb{A}^n,\divis(f))}\ge\frac{w(f)}{\sum_{i=1}^{n}w(x_i)}.
\end{equation*}
\end{prop}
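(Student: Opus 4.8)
The plan is to uniformize the weight $w$ by the finite monomial cover $g\colon\mathbb{A}^n_y\to\mathbb{A}^n_x$, $x_i=y_i^{a_i}$ (where $a_i=w(x_i)$), under which the weighted multiplicity becomes an \emph{ordinary} multiplicity, and then to transport log canonicity across $g$ by means of Proposition \ref{pull-back}. Writing $m=w(f)$, we may assume $m>0$, since otherwise the right-hand side vanishes and there is nothing to prove.

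First I would treat the generic situation in which every $a_i$ is positive and prime to $p$. Then $g$ is finite and separable (separability fails exactly when $p\mid a_i$), its ramification divisor is $R=\sum_i(a_i-1)\divis(y_i)$, and a monomial $x^{\alpha}$ of $w$-weight $\langle w,\alpha\rangle$ pulls back to $\prod_i y_i^{a_i\alpha_i}$ of total degree $\langle w,\alpha\rangle$. Since distinct $\alpha$ give distinct $y$-monomials there is no cancellation among terms of minimal degree, so $\mathrm{mult}_0(f\circ g)=m$. Put $t=\lct_0(\mathbb{A}^n,\divis f)$; the pair $(\mathbb{A}^n_x,t\,\divis f)$ is log canonical at the origin, and the ramification formula gives $g^{*}(K_{\mathbb{A}^n_x}+t\,\divis f)=K_{\mathbb{A}^n_y}+\Delta'$ with $\Delta'=t\,\divis(f\circ g)-R$. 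Applying Proposition \ref{pull-back} over a neighborhood of the origin, the pulled-back pair is log canonical at the unique point of $g^{-1}(0)$.

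I would then extract the inequality by testing $\Delta'$ against the single exceptional divisor $E'$ of the blow-up of $\mathbb{A}^n_y$ at the origin. One has $a(E';\mathbb{A}^n_y,0)=n-1$, $\mathrm{ord}_{E'}(\divis(f\circ g))=m$ and $\mathrm{ord}_{E'}(\divis y_i)=1$, so that
\[
a(E';\mathbb{A}^n_y,\Delta')=(n-1)-\Big(tm-\sum_i(a_i-1)\Big)=\Big(\sum_i a_i-1\Big)-tm.
\]
Log canonicity forces this to be $\ge-1$, i.e. $tm\le\sum_i a_i$, which is precisely $1/\lct_0(\mathbb{A}^n,\divis f)\ge m/\sum_i a_i$ in this case.

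The main obstacle is that $g$ becomes inseparable as soon as $p\mid a_i$ for some $i$ (and degenerates when some $a_i=0$), so Proposition \ref{pull-back} no longer applies. Here I would use the perturbation promised in the introduction: the quantity $W(w)=w(f)/\sum_i w(x_i)$ is the quotient of a minimum of finitely many linear forms in $w$ by the linear form $\sum_i w_i$, hence continuous and homogeneous of degree zero on $w\neq 0$. Approximating the ray $\mathbb{R}_{>0}\cdot w$ by integer weight vectors $w'$ with all entries positive and prime to $p$ — which is possible by rounding and then, wherever an entry is divisible by $p$, increasing it by $1$, thereby changing each entry by at most $1$ while clearing the divisibility, so the rescaled vectors still converge to the direction of $w$ — the separable case already proven gives $\lct_0(\mathbb{A}^n,\divis f)\le W(w')$ for every such $w'$, and letting $w'\to w$ yields the desired bound by continuity. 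One should also note that $\Delta'$ is in general not effective, so throughout I work directly with the discrepancy inequality $a(E';\cdot,\Delta')\ge-1$, i.e. with sub-log-canonical pairs, which is the form in which Proposition \ref{pull-back} is actually invoked.
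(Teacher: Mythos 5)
Your proposal is correct and follows essentially the same route as the paper: the monomial cover $g(x_i)=x_i^{w(x_i)}$ in the separable case (all weights positive and prime to $p$), transport of log canonicity via Proposition \ref{pull-back}, the discrepancy computation against the blow-up of the origin yielding $-1+\sum_i w(x_i)-tw(f)\ge -1$, and a density/continuity argument in the weight to handle the remaining cases. Your explicit remark that $\Delta'$ need not be effective and that one works with the discrepancy inequality directly is a point the paper glosses over, but otherwise the two arguments coincide.
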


\begin{proof}
\begin{Step}\label{separable case}
First we establish the inequality for those $w$'s such that
$w(x_i)>0$ holds for all $i=1,\dots,n$, and
$p$ divides none of the $w(x_i)$'s.

In this case the inequality can be established along the
same line as the original proof, since we have Proposition \ref{pull-back}. For the sake
of completeness, we reestablish the argument.

Consider $g:\mathbb{A}^{n}_{k}\to\mathbb{A}^{n}_{k}$ given by $g(x_i)=x_i^{w(x_i)}$.
By the assumptions on $w(x_i)$'s, $g$ is dominant and separable.
Take a real number $c\in\mathbb{R}_{\ge 0}$ and assume $(\mathbb{A}^{n}_{k},c\cdot
\divis(f))$ is lc at $0$. Now calculate the pull-back of $K_{\mathbb{A}^{n}_{k}}+
c\cdot\divis(f)$ by $g$:
\begin{eqnarray*}
 &&g^{*}(K_{\mathbb{A}^{n}_{k}}+c\cdot\divis(f)) \\
&=&K_{\mathbb{A}^{n}_{k}}
+\sum_{i=1}^{n}(1-w(x_i))\divis(x_i)+c\cdot\divis(f(x_1^{w(x_1)},\dots,x_n^{w(x_n)})) \\
&=:&K_{\mathbb{A}^{n}_{k}}+\Delta'. \\
\end{eqnarray*}

By Proposition \ref{pull-back}, we see $(\mathbb{A}^{n}_{k},\Delta')$ is lc at $0$.
Let $E$ be the exceptional divisor of the blow-up of $\mathbb{A}^{n}_{k}$ at the origin.
We know that $a(E;\mathbb{A}^{n}_{k},\Delta')\ge-1$ holds. With a calculation we see that
$a(E;\mathbb{A}^{n}_{k},\Delta')$ equals to
$-1+\sum_{i}w(x_i)-cw(f)$, obtaining the inequality.
\end{Step}

\begin{Step}\label{general case}
Now consider the continuous function $\varphi:{(\mathbb{Q}_{\ge 0})}^n\setminus\{0\}\to \mathbb{Q}$
defined by
\begin{equation*}
\varphi{(w)}=\frac{w(f)}{\sum_iw(x_i)},
\end{equation*}
as in the case when $w(x_i)$'s are integers.
If we replace $w$ by some positive multiple of it, the value of $\varphi$ never changes. Therefore $\varphi$
factors through the quotient space 
\begin{equation*}
S:={(\mathbb{Q}_{\ge 0})}^n\setminus\{0\}/\mathbb{Q}_{>0},
\end{equation*}
inducing the continuous function 
$\overline{\varphi}:S\to \mathbb{Q}$.

The set of points represented by those $w$'s satisfying the assumptions in Step \ref{separable case}
is dense in $S$. Hence, by the continuity of $\overline{\varphi}$, we see that
\begin{equation*}
\overline{\varphi}(s)\le\frac{1}{\lct_{0}(\mathbb{A}^n,\divis(f))}
\end{equation*}
holds for arbitrary $s\in S$. We finish the proof.

\end{Step}
\end{proof}

\begin{rem}
Step \ref{general case} in the proof above is inevitable, for $(\mathbb{A}^n_{k},\Delta')$ need not be lc if
$g$ is inseparable.
For example, consider the case $n=2$,
$w(x_1)=w(x_2)=p$ and $f(x_1,x_2)=x_1-x_2$. In this case $\lct_{0}(\mathbb{A}^2_{k},
\divis(f))=1$.
On the other hand 
\begin{equation*}
\Delta'=(1-p)\divis(x_1x_2)+p\cdot\divis(x_1-x_2),
\end{equation*}
hence $(\mathbb{A}^2_{k},\Delta')$ is not lc at the origin. Note also that even in this case
$a(E;\mathbb{A}^2_{k},\Delta')\ge -1$ holds, because of Proposition \ref{kl}.

\end{rem}


\subsection{Proof of Theorem \ref{yl}}

\begin{proof}[Proof of Theorem \ref{yl}]
We only discuss stable case. Semi-stable case can be proven exactly in the same way.
We have only to confirm the inequality (\ref{eq of nc by yl}) of Lemma \ref{nc by yl}. 
On the other hand, by the assumption and Proposition \ref{kl},
the inequality clearly holds.
\end{proof}

\begin{rem}
Theorem \ref{yl} has the following direct corollary, which is slightly weaker:
\begin{cor}\label{Fpt version}
If $\Fpt(\mathbb{G},Z(X))>\frac{n+1}{d}$
(resp. $\ge\frac{n+1}{d}$), then $X$ is Chow stable (resp. Chow semi-stable).
\end{cor}
Above $\Fpt$ denotes the \textit{F-pure threshold} of the pair $(\mathbb{G},Z(X))$.

Corollary \ref{Fpt version} is deduced from Theorem \ref{yl} via
\cite[Theorem 3.3]{hw}, which says:
\begin{quote}
F-pure $\Rightarrow$ log canonical.
\end{quote}
We can also show Corollary \ref{Fpt version} by directly proving
the $\Fpt$ version of Proposition \ref{kl}, using the Fedder-type
criterion for F-purity due to \cite{hw}.
\end{rem}


\appendix


 \section{Chow stability of the sum}\label{sum}

In this section we show that the sum of two Chow semi-stable cycles of the same
dimension are again Chow semi-stable. Moreover if one of them is stable,
it follows that the sum also becomes stable.

\begin{prop}\label{stability of sum}
Let $Y,Z$ be Chow semi-stable cycles of the same dimension in a projective
space $\mathbb{P}^{n}_{k}$. Then $Y+Z$ is again Chow semi-stable.
Furthermore if $Y$ is Chow stable, so is $Y+Z$.
\end{prop}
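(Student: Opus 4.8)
The plan is to use the numerical criterion (Proposition~\ref{nc}) together with the additivity of the Chow form under taking sums of cycles. The key observation is that for cycles $Y$ and $Z$ of the same dimension $r$, one has $Z(Y+Z)=Z(Y)+Z(Z)$ at the level of Chow divisors, which on the level of Chow forms means that a defining equation $F_{Y+Z}$ of $Z(Y+Z)$ is (up to scalar) the \emph{product} $F_Y\cdot F_Z$ of the Chow forms of $Y$ and $Z$. This is the crucial structural input: the Chow form of a sum is the product of the Chow forms, because the Chow divisor is defined additively on cycles while the passage to defining equations turns addition of divisors into multiplication of equations.

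First I would fix an arbitrary $g\in SL(n+1,k)$ and $\vec r\in\mathcal R$, and set $F_Y,F_Z$ to be the Chow forms of $g^*Y$ and $g^*Z$. Then the Chow form of $g^*(Y+Z)$ is $F_Y\cdot F_Z$. The heart of the argument is the behaviour of the weight function $\wt$ (equivalently the numerical function $\mu$) under this product. Because $\mu(Z(\cdot),g,\vec r)$ is the \emph{minimum} of the weights $\wt(I_1,\dots,I_d)$ over monomials $\Delta_{I_1}\cdots\Delta_{I_d}$ occurring in the Chow form, and because the weight of a product of monomials is the sum of the weights, the minimal-weight monomial appearing in $F_Y\cdot F_Z$ is obtained by multiplying a minimal-weight monomial of $F_Y$ with a minimal-weight monomial of $F_Z$. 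Hence I would establish the additivity
\begin{equation*}
\mu\bigl(Z(Y+Z),g,\vec r\bigr)=\mu\bigl(Z(Y),g,\vec r\bigr)+\mu\bigl(Z(Z),g,\vec r\bigr).
\end{equation*}
The one point requiring care here is that no cancellation destroys the minimal-weight term: the lowest-weight monomial of $F_Y\cdot F_Z$ is a genuine product of the lowest-weight monomials of the two factors, and its coefficient is the product of the two (nonzero) coefficients, so it survives and indeed realizes the minimum. This is the step I expect to be the main obstacle, since it requires checking that the monomial of minimal total weight cannot be cancelled by other cross terms and that it really lies in $\mathcal I(F_Y\cdot F_Z)$.

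Once the additivity formula for $\mu$ is in hand, the conclusion is immediate. If both $Y$ and $Z$ are Chow semi-stable, then by Proposition~\ref{nc} both summands on the right are $\le 0$ for every $g$ and $\vec r$, so their sum is $\le 0$, giving $\mu(Z(Y+Z),g,\vec r)\le 0$ for all $g,\vec r$; hence $Y+Z$ is Chow semi-stable. If moreover $Y$ is Chow stable, then $\mu(Z(Y),g,\vec r)<0$ strictly while $\mu(Z(Z),g,\vec r)\le 0$, so their sum is strictly negative for all $g$ and $\vec r$, and Proposition~\ref{nc} yields Chow stability of $Y+Z$. This makes transparent the ``surprising'' phenomenon noted in the introduction: strict negativity of the numerical function for the stable summand is inherited additively, regardless of how the cycles meet geometrically, because only the weights of monomials in the Chow forms enter the criterion.
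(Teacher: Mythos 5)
Your proof is essentially the paper's own argument: the paper phrases it multiplicatively, checking for each one-parameter subgroup that $\lim_{t\to 0}\lambda(t)\cdot(F\cdot G)=(\lim_{t\to 0}\lambda(t)\cdot F)\cdot(\lim_{t\to 0}\lambda(t)\cdot G)$, which is exactly your additivity $\mu(Z(Y+Z),g,\vec r)=\mu(Z(Y),g,\vec r)+\mu(Z(Z),g,\vec r)$ in logarithmic form. The one step you flag as the main obstacle --- that the minimal-weight part of $F_Y\cdot F_Z$ is not destroyed by cancellation --- is precisely where the paper invokes that $\mathcal{B}=\bigoplus_{d\ge 0}\mathcal{B}_d$ is an integral domain (a subring of a polynomial ring, with the weight grading compatible with multiplication): the product of the two nonzero minimal-weight components is therefore nonzero, and every cross term has strictly larger weight, so the minimum is attained and your formula holds.
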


In the proof  we freely use the notation like $\lim_{t\to 0}\lambda(t)\cdot F$, as in
\cite{git}, since the idea becomes clearer. To be
logically complete, we of course need to replace the argument suitably. It is a routine work,
so we omit the detail.

\begin{proof}
Let $d$,$e$ be the degrees of $Y$ and $Z$ respectively. 
Let $F\in\mathcal{B}_d$, $G\in\mathcal{B}_e$ be the Chow forms of $Y$ and $Z$, 
respectively. Then the Chow form of $Y+Z$ is given by $F\cdot G\in\mathcal{B}_{d+e}$.

Choose a non-trivial $1$-parameter subgroup (1-PS) 
$\lambda:\mathbb{G}_{m}\to SL(n+1,k)$. Via $\lambda$ we pull back
the canonical actions of $SL(n+1,k)$
onto $\mathcal{B}_d,\mathcal{B}_e$ and $\mathcal{B}_{d+e}$ to $\mathbb{G}_{m}$.
Now consider the natural multiplication map
$\mu:\mathcal{B}_d\times\mathcal{B}_e\to\mathcal{B}_{d+e}$, given by $(F,G)\mapsto F\cdot G$.
If we pose the diagonal action of $\mathbb{G}_{m}$ on the source, $\mu$ becomes
equivariant.

Assume that $Y,Z$ are both Chow semi-stable. Then both $\lim_{t\to 0}\lambda(t)\cdot F\not= 0$
and $\lim_{t\to 0}\lambda(t)\cdot G\not= 0$ holds. Now since we know that $\mu$ is continuous,
\begin{equation*}
\begin{split}
& \lim_{t\to 0}\lambda(t)\cdot(F\cdot G) \\
= & (\lim_{t\to 0}\lambda(t)\cdot F)\cdot
(\lim_{t\to 0}\lambda(t)\cdot G) \\
\not=& 0, \\
\end{split}
\end{equation*}
 since $\mathcal{B}=\oplus_{d\ge 0}\mathcal{B}_d$
is an integral domain. Therefore $Y+Z$ is again Chow semi-stable.
 
Second, assume further that $Y$ is Chow stable. Then
$\lim_{t\to 0}\lambda(t)\cdot F=\infty$, so that 
\begin{equation*}
\begin{split}
& \lim_{t\to 0}\lambda(t)\cdot(F\cdot G) \\
= & (\lim_{t\to 0}\lambda(t)\cdot F)\cdot
(\lim_{t\to 0}\lambda(t)\cdot G) \\
= & \infty\cdot(\lim_{t\to 0}\lambda(t)\cdot G) \\
= & \infty, \\
\end{split}
\end{equation*}
since $(\lim_{t\to 0}\lambda(t)\cdot G)$ is not $0$.
Therefore $Y+Z$ is Chow stable.

\end{proof}

\begin{rem}
We can not expect the converse of Proposition \ref{stability of sum} at all.
There exists a semi-stable cycle such that all of its subcycles are unstable: 

\begin{exmp}
Take the union of three lines on a plane which are in a general position. The union
itself is Chow semi-stable (see \cite[\S 4-2]{git}), but lines and reducible conics
 on a plane are Chow unstable.
\end{exmp}
\end{rem}

However, the following holds:

\begin{prop}
Let $Z$ be a cycle of $\mathbb{P}^n_{k}$. Then the followings are equivalent:
\begin{enumerate}
\item $Z$ is Chow (semi-)stable.
\item $mZ$ is Chow (semi-)stable for any positive integer $m\in\mathbb{Z}_{>0}$.
\item $mZ$ is Chow (semi-)stable for some positive integer $m\in\mathbb{Z}_{>0}$.
\end{enumerate}
\end{prop}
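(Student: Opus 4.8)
The plan is to reduce the whole statement to one scaling identity for the numerical function and then quote the numerical criterion (Proposition \ref{nc}). First I would observe that the Chow divisor is defined additively, so that $Z(mZ)=m\,Z(Z)$ as divisors on $\mathbb{G}$; consequently, if for a fixed $g\in SL(n+1,k)$ the element $F\in\mathcal{B}_d$ is the Chow form of $g^{*}Z$, then $F^{m}\in\mathcal{B}_{md}$ is the Chow form of $g^{*}(mZ)$ (up to scalar). Thus $mZ$ and $Z$ share the same group element $g$, and I only need to compare $\mu(Z(mZ),g,\vec{r})$ with $\mu(Z(Z),g,\vec{r})$.

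The key step is the identity $\mu(Z(mZ),g,\vec{r})=m\,\mu(Z(Z),g,\vec{r})$, valid for every $g\in SL(n+1,k)$ and $\vec{r}\in\mathcal{R}$. To prove it, fix $\vec{r}$ and regard $\mathcal{B}$ as a $\mathbb{G}_m$-representation via the corresponding $1$-PS $\lambda$, decomposing $F=\sum_j F_j$ into weight eigencomponents. By Definition \ref{numerical function}, $\mu(Z(Z),g,\vec{r})$ is the least weight $\mu_0$ with $F_{\mu_0}\neq 0$. Since multiplication respects the weight grading, the lowest-weight component of $F^{m}$ is $F_{\mu_0}^{m}$, and this is nonzero because $\mathcal{B}=\bigoplus_{d\ge 0}\mathcal{B}_d$ is an integral domain (the same fact exploited in Proposition \ref{stability of sum}). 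Hence the least weight occurring in $F^{m}$ is exactly $m\mu_0$, which is the claimed identity.

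Finally I would feed this into Proposition \ref{nc}. As $m>0$, the scaling identity gives $\mu(Z(mZ),g,\vec{r})<0\iff\mu(Z(Z),g,\vec{r})<0$, and likewise with $\le 0$; running over all $g$ and $\vec{r}$ shows that $Z$ is Chow stable (resp. semi-stable) if and only if $mZ$ is, for each fixed $m\in\mathbb{Z}_{>0}$. This yields $(1)\Leftrightarrow(2)\Leftrightarrow(3)$ at once, the implication $(2)\Rightarrow(3)$ being trivial. Alternatively, $(1)\Rightarrow(2)$ can be obtained directly by writing $mZ=Z+\dots+Z$ and applying Proposition \ref{stability of sum} inductively, but the reverse implication $(3)\Rightarrow(1)$ really needs the scaling identity. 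The only non-formal point, and thus the main obstacle, is the nonvanishing of the lowest-weight component $F_{\mu_0}^{m}$: one must rule out cancellation in passing from $F$ to $F^{m}$, which is precisely where the integrality of $\mathcal{B}$ is used.
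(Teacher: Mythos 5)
Your proof is correct and takes essentially the same approach as the paper: the paper's key step is the computation $\lim_{t\to 0}\lambda(t)\cdot G^{m}=\bigl(\lim_{t\to 0}\lambda(t)\cdot G\bigr)^{m}\neq 0$, justified by the integrality of $\mathcal{B}$, which is precisely your scaling identity $\mu(Z(mZ),g,\vec{r})=m\,\mu(Z(Z),g,\vec{r})$ expressed in the informal limit language. Your explicit weight-decomposition argument, isolating the nonvanishing of the lowest-weight component $F_{\mu_0}^{m}$, is exactly the ``logically complete'' rewriting that the paper states it omits.
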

\begin{proof}
We have only to prove (3)$\Rightarrow$(1).
Let $G$ be the Chow form of $Z$. Then $G^{m}$ gives the Chow form of $mZ$.
Assume that $mZ$ is Chow semi-stable. Take
any 1-PS $\lambda$ as in the proof of the Proposition \ref{stability of sum}. Then
$0\not=\lim_{t\to 0}\lambda(t)\cdot G^m=\left(\lim_{t\to 0}\lambda(t)\cdot G\right)^{m}$, hence
$\lim_{t\to 0}\lambda(t)\cdot G\not=0$. Therefore $Z$ is semi-stable. Stable case can also be
shown via a similar argument.
\end{proof}

\begin{exmp}\label{counter example}
Let $Y\subset\mathbb{P}^n_{k}$ be a non-singular hypersurface of degree three, which is
Chow stable by Theorem \ref{hp}. By Proposition \ref{stability of sum},
$mY$ is also Chow stable for all the positive integers $m$. 
On the other hand, $\lct(\mathbb{P}^n_{k},mY)=\frac{1}{m}$ and hence
$\frac{1}{m}<\frac{n+1}{3m}$ if $n\ge 3$. Thus we obtain a sequence of examples of
Chow stable hypersurfaces whose stability can not be detected by Theorem \ref{yl}.
\end{exmp}


 \end{document}